\newtheorem{theorem}{Theorem}[section]
\newtheorem{lemma}[theorem]{Lemma}
\newtheorem{corollary}[theorem]{Corollary}
\newtheorem{proposition}[theorem]{Proposition}
\newtheorem{remark}[theorem]{Remark}
\newtheorem{definition}[theorem]{Definition}
\numberwithin{equation}{section}
\newcommand{\s}{\section}
\newcommand{\R}{\mathbb R}
\renewcommand{\div}{{\rm div}}
\newcommand{\bt}{\begin{theorem}}
\newcommand{\et}{\end{theorem}}
\newcommand{\bl}{\begin{lemma}}
\newcommand{\el}{\end{lemma}}
\newcommand{\bd}{\begin{definition}}
\newcommand{\ed}{\end{definition}}
\newcommand{\bc}{\begin{corollary}}
\newcommand{\ec}{\end{corollary}}
\newcommand{\bp}{\begin{proof}}
\newcommand{\ep}{\end{proof}}
\newcommand{\bx}{\begin{example}}
\newcommand{\ex}{\end{example}}
\newcommand{\bi}{\begin{exercise}}
\newcommand{\ei}{\end{exercise}}
\newcommand{\bo}{\begin{prop}}
\newcommand{\eo}{\end{prop}}
\newcommand{\br}{\begin{remark}}
\newcommand{\er}{\end{remark}}
\newcommand{\be}{\begin{equation}}
\newcommand{\ee}{\end{equation}}
\newcommand{\ba}{\begin{align}}
\newcommand{\ea}{\end{align}}
\newcommand{\bn}{\begin{enumerate}}
\newcommand{\en}{\end{enumerate}}
\newcommand{\bg}{\begin{align*}}
\newcommand{\bcs}{\begin{cases}}
\newcommand{\ecs}{\end{cases}}
\newcommand{\bean}{\begin{eqnarray*}}
\newcommand{\eean}{\end{eqnarray*}}
\renewcommand{\S}{{\mathcal S}}
\renewcommand{\O}{{\mathcal O}}
\newcommand{\T}{{\mathcal T}}
\begin{document}

\title[Nehari manifold for critical fractional systems]{The Nehari manifold for fractional systems \\ involving critical nonlinearities}

\author[X. He]{Xiaoming He}
\author[M.\ Squassina]{Marco Squassina}
\author[W.\ Zou]{Wenming Zou}

\address[X. He]{College of Science,
    Minzu University of China
    \newline \indent
    Beijing 100081, China}
\email{xmhe923@muc.edu.cn}

\address[M. Squassina]{Dipartimento di Informatica,
    Universit\`a degli Studi di Verona
    \newline\indent
    Strada Le Grazie 15, I-37134 Verona, Italy}
\email{marco.squassina@univr.it}

\address[W.\ Zou]{Department of Mathematical Sciences,
     Tsinghua University
        \newline \indent
      Beijing 100084, China}
\email{wzou@math.tsinghua.edu.cn}


\thanks{X.\ He is  supported by NSFC
    (11371212, 10601063, 11271386) while W.\ Zou
    by NSFC (11371212, 11271386)}

\subjclass[2000]{47G20, 35J50, 35B65}
\keywords{Fractional elliptic system, concave-convex nonlinearities, Nehari manifold.}

\begin{abstract}
We study the combined effect of
concave and convex nonlinearities on the number of positive
solutions for a fractional system
involving critical Sobolev exponents. With the help of the
Nehari manifold, we prove that the system admits at least two
positive solutions when the pair of parameters $(\lambda,\mu)$
belongs to a suitable subset of $\R^2$.
\end{abstract}

\maketitle

\begin{center}
\begin{minipage}{9.5cm}
\small
\tableofcontents
\end{minipage}
\end{center}

\medskip

  \s{Introduction}
This paper is concerned with the multiplicity of positive
solutions for the following elliptic system involving the fractional
Laplacian
\be\label{zwm=1}
\begin{cases}
            (-\Delta)^su=\lambda|u|^{q-2}u+\frac{2\alpha}{\alpha+\beta}|u|^{\alpha-2}u|v|^{\beta} &\mbox{in}~ \Omega,\vspace{0.1cm}\\
           (-\Delta)^sv=\mu|v|^{q-2}v+\frac{2\beta}{\alpha+\beta}|u|^{\alpha}|v|^{\beta-2}v &\mbox{in}~ \Omega,\vspace{0.1cm}\\
            u=v=0               &\mbox{on} ~\partial \Omega,
 \end{cases}\ee
where  $\Omega\subset\R^N$  is a smooth  bounded domain, $\lambda,\mu>0$, $1<q<2$ and
    $\alpha>1,\beta>1$ satisfy $\alpha+\beta=2^*_s=2N/(N-2s)$, $s\in(0,1)$ and $N>2s$.
    When $\alpha=\beta, \alpha+\beta=p\leq 2^*_s, \lambda=\mu$ and $u=v$, problem
\eqref{zwm=1} reduces to the semilinear scalar fractional
elliptic equation
 \be\label{zwm=2}
\begin{cases}
            (-\Delta)^su=\lambda|u|^{q-2}u+|u|^{p-2}u &\mbox{in}~ \Omega, \vspace{0.1cm}\\
                      u=0               &\mbox{on} ~\partial
                      \Omega.
     \end{cases}\ee
Recently, a great attention has been focused on the study of nonlinear
problems like \eqref{zwm=2} which involve the fractional Laplacian.\ This type of operators
naturally arises in physical situations such as thin obstacle problems, optimization,
 population dynamics,  geophysical fluid dynamics,
mathematical finance, phases transitions, straitified materials,
anomalous diffusion, crystal dislocation, soft thin films,
semipermeable membranes, flames propagation, conservation laws,
ultra-relativistic limits of quantum mechanics, quasi-geostrophic
flows, multiple scattering, minimal surfaces, materials science and
water waves, see \cite{NPV}.  We refer to \cite{CLB,SV1,
SV7,Seira, Wei,Yu} for the subcritical case and to
\cite{B1,B2,CD,CDU,CT1,SZ,SV6,Tan} for the critical case.
In the remarkable paper \cite{CS}, Caffarelli and Silvestre gave a new formulation
of the fractional Laplacian through Dirichlet-Neumann maps. This is
extensively used in the recent literature since it allows to
transform nonlocal problems to local ones, which permits to use
variational methods.\ For example, Barrios, Colorado, de Pablo and S\'{a}nchez \cite{B1} used the idea of
the $s$-harmonic extension and studied the effect of lower order perturbations in
 the existence of positive solutions of \eqref{zwm=2}.\  Br\"{a}ndle, Colorado and  de Pablo   \cite{BCP} investigated the fractional
elliptic equation \eqref{zwm=2} involving concave-convex nonlinearity, and
obtained an analogue multiplicity result  to  the problem considered by
Ambrosetti, Br\'ezis and Cerami in  \cite{Ambrosetti}. In the case $q=2$ and $p=2^*_s,$
Servadei and Valdinoci \cite{SV6} studied \eqref{zwm=2}
and extended the classical Br\'ezis-Nirenberg result \cite{BN}
to the nonlocal case.
In \cite{CT}, Cabr\'{e} and Tan defined $(-\Delta)^{1/2}$ through the spectral decomposition of the Laplacian
operator on $\Omega$ with zero Dirichlet boundary conditions. With
classical local techniques, they established existence of positive
solutions for problems with subcritical nonlinearities, regularity
and $L^{\infty}$-estimates for weak solutions. In particular, Tan \cite{Tan} considered
 \be\label{zwm=3}
\begin{cases}
            (-\Delta)^{1/2}u=\lambda u+u^{\frac{N+1}{N-1}} &\mbox{in}~ \Omega,\vspace{0.1cm}\\
                      u=0               &\mbox{on} ~\partial \Omega,
\end{cases}\ee
investigating the solvability (see also \cite{Yu} for a subcritical situation).
Very recently,  Colorado,  de Pablo, and
S\'{a}nchez  \cite{CDU}  studied the following nonhomogeneous fractional equation involving critical Sobolev exponent
 \begin{equation*}
\begin{cases}
            (-\Delta)^su=|u|^{2^*_s-2}u+f(x) &\mbox{in}~ \Omega,\vspace{0.1cm}\\
                      u=0               &\mbox{on} ~\partial
                      \Omega.
\end{cases}
\end{equation*}
and proved existence and multiplicity of solutions
under appropriate conditions on the size of $f$.  For the same problems,
Shang,  Zhang and Yang \cite{SZ}  obtained   similar results.
\vskip3pt
\noindent
 The analogue problems to  \eqref{zwm=1} for the Laplacian operator have been studied extensively in recent years,
see \cite{Alves,CM,Han,HL,Wu1,Wu2}  and the
 references therein.\ In particular,  Hu and  Lin  \cite{HL}  studied the Laplacian system  with critical growth and obtained
  the existence and multiplicity results of positive solutions by variational methods.
 \vskip3pt
 \noindent
The purpose of this paper is to study system \eqref{zwm=1}
in the critical case $\alpha+\beta=2^*_s$. Using variational methods and a {\em Nehari manifold decomposition},
       we  prove that system \eqref{zwm=1} admits at least two positive solutions when the pair of parameters $(\lambda,\mu)$
   belongs to a certain subset of  $\R^2$. To our best knowledge, there are just a few results in the literature on the fractional
system \eqref{zwm=1} with both concave-convex nonlinearities and
critical growth terms. We point out that we adopt in the paper the
{\em spectral} (or {\em regional}) definition of the fractional
laplacian in a bounded domain based upon a Caffarelli-Silvestre type
extension (see \cite{CDU}), and not the {\em integral definition}.
We shall refer to \cite{confronto} for a nice comparison between
these two different notions. In \cite{gruppone}, a problem like
\eqref{zwm=1} with $q=2$ is investigated, using the integral notion,
from the point of view of existence, nonexistence and regularity.
\vskip3pt \noindent To formulate the main result, we introduce
\be\label{zwm=6}
\Lambda_1:=\left(\frac{2^*_s-q}{2^*_s-2}(k_s\S(s,N))^{-\frac{q}{2}}
|\Omega|^{\frac{2^*_s-q}{2^*_s}}\right)^{-\frac{2}{2-q}}
\left[\frac{2-q}{2(2^*_s-q)}(k_s
\S_{s,\alpha,\beta})^{\frac{2^*_s}{2}}\right]^{\frac{2}{2^*_s-2}},
\ee where $|\Omega|$ is the Lebesgue measure of $\Omega$, $k_s$ is a
normalization constant  and $\S(s,N),\S_{s,\alpha,\beta}$ are best
Sobolev constants that will be introduced later. For $\gamma>0$, we
also consider
$$
{\mathscr C}_{\gamma}:=\big\{(\lambda,\mu)\in\R^2_+:\,  0<\lambda^{\frac{2}{2-q}}+\mu^{\frac{2}{2-q}}<\gamma\big\}.
$$
Then we have the following

\bt\label{th11}
The following facts holds
\begin{itemize}
\item[$(i)$] system \eqref{zwm=1} has at least one positive solution
for all $(\lambda,\mu)\in {\mathscr C}_{\Lambda_1}$.
\item[$(ii)$] there is $\Lambda_2<\Lambda_1$ such that
\eqref{zwm=1} has at least two positive solutions
for $(\lambda,\mu)\in {\mathscr C}_{\Lambda_2}$.
\end{itemize}
\et
\smallskip

\noindent Concerning regularity, one can get a priori estimates for
the solutions to \eqref{zwm=1} and hence obtain, as in
\cite[Proposition 5.2]{B1}, that $u,v\in
C^\infty(\overline{\Omega})$ for $s=1/2$, $u,v\in
C^{0,2s}(\overline{\Omega})$ if $0<s<1/2$ and $u,v\in
C^{1,2s-1}(\overline{\Omega})$ if $1/2<s<1$.

\vskip3pt
\noindent
The paper is organized as follows.\
In Section 2 we introduce the variational setting of the problem and present some preliminary results. In Section 3 we show that the
Palais-Smale condition holds for the energy
functional associated with  \eqref{zwm=1}
at energy levels in a suitable range related to the best
Sobolev constants. In Section 4 we give some properties about the Nehari manifold and  fibering maps. In Section 5 we investigate the existence of Palais-Smale
sequences. In Section 6 we obtain solutions to some related local minimization problems.
Finally, the proof  of Theorem \ref{th11} is given in Section 7.


\s{Some preliminary facts}

In this section, we collect some preliminary facts in order to establish the functional setting.
First of all, let us introduce the standard notations for future use in this paper. We denote the upper half-space in $\R^{N+1}_+$ by
$$
\R^{N+1}_{+}:=\{z=(x,y)=(x_1,\cdots,x_N,y)\in\R^{N+1}:y>0\}.
$$
Let $\Omega\subset\R^N$ be a smooth bounded domain.\ Denote by
$$
\mathcal
{C}_{\Omega}:=\Omega\times(0,\infty)\subset\R^{N+1}_{+},
$$
the cylinder with base $\Omega$ and its lateral boundary by
$\partial_L\mathcal {C}_{\Omega}:=\partial\Omega\times(0,\infty).$
The powers $(-\Delta)^s$ of the positive Laplace operator $-\Delta$, in $\Omega$, with zero Dirichlet boundary
conditions are defined via its spectral decomposition, namely
$$
(-\Delta)^su(x):=\sum_{j=1}^{\infty} a_j\rho_j^s\varphi_j(x),
$$
where $(\rho_j,\varphi_j)$ is the sequence of eigenvalues and eigenfunctions of  the
operator $-\Delta$ in $\Omega$ under zero Dirichlet boundary data
and $a_j$ are the coefficients of $u$ for the base $\{\varphi_j\}_{j=1}^{\infty}$
in $L^2(\Omega)$. In fact, the fractional Laplacian $(-\Delta)^{s}$ is well defined in the space of functions
$$
H^s_0(\Omega):=\Big\{u=\sum_{j=1}^{\infty}
a_j\varphi_j\in L^2(\Omega):~\|u\|_{H^s_0}=\Big(\sum_{j=1}^{\infty} a_j^2\rho_j^s \Big)^{1/2}<\infty\Big\},
$$
and $\|u\|_{H^s_0}=\|(-\Delta)^{s/2}u\|_{L^2(\Omega)}$. The dual space $H^{-s}(\Omega)$ is defined in the standard way, as well as the inverse operator $(-\Delta)^{-s}.$
\vskip0.1in
\bd\label{df21} We say that $(u,v)\in H^s_0(\Omega)\times
H^s_0(\Omega)$ is a solution of $(1.1)$ if the identity
\begin{align*}
& \int_{\Omega}\left((-\Delta)^{\frac{s}{2}}u(-\Delta)^{\frac{s}{2}}\varphi_1+(-\Delta)^{\frac{s}{2}}v
(-\Delta)^{\frac{s}{2}}\varphi_2\right)dx
-\int_{\Omega}(\lambda|u|^{q-2}u\varphi_1+\mu|v|^{q-2}v\varphi_2)dx \\
&-\frac{2\alpha}{\alpha+\beta}\int_{\Omega}|u|^{\alpha-2}u|v|^{\beta}\varphi_1dx
-\frac{2\beta}{\alpha+\beta}\int_{\Omega}|u|^{\alpha}|v|^{\beta-2}v\varphi_2dx=0,
\end{align*}
holds for all $(\varphi_1,\varphi_2)\in H^s_0(\Omega)\times
H^s_0(\Omega).$\ed

\noindent
Associated with problem \eqref{zwm=1}, we consider the energy functional
\begin{align*}
\mathcal{J}_{\lambda,\mu}(u,v) &:=\frac{1}{2}\int_{\Omega}\left(|(-\Delta)^{\frac{s}{2}}u|^2+|(-\Delta)^{\frac{s}{2}}v|^2\right)dx
-\frac{1}{q}\int_{\Omega}(\lambda|u|^q+\mu|v|^q)dx \\
&-\frac{2}{\alpha+\beta}\int_{\Omega}|u|^{\alpha}|v|^{\beta}dx.
\end{align*}
The functional is well defined in $H^s_0(\Omega)\times
H^s_0(\Omega)$, and moreover, the critical points of the functional
$\mathcal {J}_{\lambda,\mu}$ correspond to solutions of
\eqref{zwm=1}.\  We now conclude the main ingredients of a recently
developed technique used in order to deal with fractional powers of
the Laplacian.  To treat the nonlocal problem \eqref{zwm=1}, we
shall study a corresponding extension problem, which allows us to
investigate problem \eqref{zwm=1} by studying a local problem via
classical variational methods. We first define the extension
operator and fractional Laplacian for functions in
$H^s_0(\Omega)\times H^s_0(\Omega)$. We refer the
reader to \cite{B1,B2,BCP,capella} and to the references therein.

\bd\label{df2.2}  For a function $u\in H^s_0(\Omega)$, we denote  its  $s$-harmonic
 extension $w=E_{s}(u)$ to the cylinder $\mathcal
{C}_{\Omega}$ as the solution of the problem
 \begin{equation*}
\begin{cases}
            \mbox{\div}(y^{1-2s}\nabla w)=0 &\mbox{in}~ \mathcal
{C}_{\Omega}\vspace{0.1cm}\\
           w=0 &\mbox{on}~ \partial_L\mathcal
{C}_{\Omega}\vspace{0.1cm}\\
            w=u               &\mbox{on} ~\Omega\times\{0\},
     \end{cases}
     \end{equation*}
     and
\begin{equation*}
 (-\Delta)^su(x)=-k_{s}\lim_{y\rightarrow0^{+}}y^{1-2s}\frac{\partial
    w}{\partial y}(x,y),
\end{equation*}
    where $k_{s}=2^{1-2s}\Gamma(1-s)/\Gamma(s)$ is a normalization constant.\ed
    \noindent
The  extension  function $w(x,y)$ belongs to the space
 $$
 X^{s}_0(\mathcal {C}_{\Omega}):=
    \overline{C^{\infty}_0(\Omega\times[0,\infty))}^{\|\cdot\|_{X^{s}_0(\mathcal
    {C}_{\Omega})}}
$$  endowed with the norm
$$
\|z\|_{X^{s}_0(\mathcal
    {C}_{\Omega})}:=\left(k_{s}\int_{\mathcal
    {C}_{\Omega}}y^{1-2s}|\nabla z|^2dxdy\right)^{1/2}.
$$
 The    extension operator is an isometry between $H^s_0(\Omega)$ and
    $X^{s}_0(\mathcal {C}_{\Omega})$, namely
\begin{equation*}
\|u\|_{H^{s}_0(\Omega)}=\|E_s(u)\|_{X^{s}_0(\mathcal
    {C}_{\Omega})}, \quad\text{for all $u\in H^s_0(\Omega)$}.
    \end{equation*}
With this extension we can reformulate \eqref{zwm=1}
as the following local problem
\be\label{2.6}
\begin{cases}
            -\mbox{div}(y^{1-2s}\nabla w_1)=0,~~-\mbox{div}(y^{1-2s}\nabla w_2)=0 &\mbox{in}~ \mathcal
{C}_{\Omega}\vspace{0.1cm}\\
           w_1=w_2=0 &\mbox{on}~ \partial_L\mathcal
{C}_{\Omega}\vspace{0.1cm}\\

w_1=u,~~ w_2=v&\mbox{on}~{\Omega}\times\{0\}\vspace{0.1cm}\\
            \frac{\partial w_1}{\partial\nu^s}=\lambda|w_1|^{q-2}w_1+\frac{2\alpha}{\alpha+\beta}|w_1|^{\alpha-2}w_1|w_2|^{\beta}
                  &\mbox{on} ~\Omega\times\{0\}\vspace{0.1cm}\\
                   \frac{\partial
                   w_2}{\partial\nu^s}=\mu|w_2|^{q-2}w_2+\frac{2\beta}{\alpha+\beta}|w_1|^{\alpha}|w_2|^{\beta-2}w_2
                    &\mbox{on} ~\Omega\times\{0\},
 \end{cases}
 \ee
 where
 $$
 \frac{\partial
w_i}{\partial\nu^s}:=-k_{s}\lim_{y\rightarrow
0^+}y^{1-2s}\frac{\partial w_i}{\partial y}, \quad i=1,2,
$$
and $w_1,w_2\in X^{s}_0(\mathcal {C}_{\Omega})$ are the $s$-harmonic extension of $u,v\in
    H^s_0(\Omega)$, respectively.
    Let
    $$
    E^s_0(\mathcal {C}_{\Omega}):=X^{s}_0(\mathcal {C}_{\Omega})\times X^{s}_0(\mathcal {C}_{\Omega}).
    $$
    An energy solution to this problem is a function $(w_1,w_2)\in E^{s}_0(\mathcal
    {C}_{\Omega})$ satisfying
    $$\begin{array}{l}
    \displaystyle k_{s}\int_{\mathcal {C}_\Omega}y^{1-2s} \nabla
    w_1\cdot\nabla\varphi_1 dxdy+k_{s}\int_{\mathcal {C}_\Omega}y^{1-2s}\nabla
    w_2\cdot\nabla\varphi_2 dxdy\vspace{0.2cm}\\
    \hspace{1cm}=\displaystyle\lambda\int_{\Omega}|w_1|^{q-2}w_1\varphi_1dx+
    \frac{2\alpha}{\alpha+\beta}\int_{\Omega}|w_1|^{\alpha-2}w_1|w_2|^{\beta}\varphi_1dx\vspace{0.2cm}\\
    \hspace{1.4cm}\displaystyle
    +\mu\int_{\Omega}|w_2|^{q-2}w_2\varphi_2dx+\frac{2\beta}{\alpha+\beta}
    \int_{\Omega}|w_1|^{\alpha}|w_2|^{\beta-2}w_2\varphi_2dx,\end{array}$$
    for all $ (\varphi_1,\varphi_2)\in E^s_0(\mathcal
    {C}_{\Omega}).
    $   If  $(w_1,w_2)\in E^s_0(\mathcal
    {C}_{\Omega})$ satisfies \eqref{2.6}, then
    $(u,v)=(w_1(\cdot,0),w_2(\cdot,0))$, defined in the sense of traces,
    belongs to the space $H^s_0(\Omega)\times H^s_0(\Omega)$ and it is
    a solution of the original problem \eqref{zwm=1}.\
    The associated energy functional to the problem
     \eqref{2.6} is denoted by
\begin{align*}
\mathcal{I}_{\lambda,\mu}(w) &:=\mathcal {I}_{\lambda,\mu}(w_1,w_2)=\displaystyle\frac{k_s}{2}\int_{\mathcal
{C}_{\Omega}}y^{1-2s}(|\nabla w_1|^2+|\nabla w_2|^2)dxdy  \\
&-\frac{1}{q}
\int_{\Omega}(\lambda|w_1|^{q}+\mu
|w_2|^{q})dx-\frac{2}{\alpha+\beta}\int_{\Omega}|w_1|^{\alpha}|w_2|^{\beta}dx. \nonumber
\end{align*}
Critical points of $\mathcal {I}_{\lambda,\mu}$ in $E^s_0(\mathcal
{C}_{\Omega})$ correspond to critical points of $\mathcal
{J}_{\lambda,\mu}:H^s_0(\Omega)\times H^s_0(\Omega)\to\R$. In the
following lemma we list some relevant inequalities
from \cite{BCP}. \vskip0.1in

\bl\label{Lemma2.3}  For any $1\leq r \leq 2^*_s$ and any $z\in
X^s_0(\mathcal {C}_{\Omega}),$ it holds
\be\label{2.8}\left(\int_{\Omega}|u(x)|^rdx\right)^{\frac{2}{r}}\leq
C\int_{\mathcal {C}_{\Omega}}y^{1-2s}|\nabla z(x,y)|^2dxdy,\quad\,\,
u:={\rm Tr}(z), \ee for some positive constant $C=C(r,s,N,\Omega)$.\
Furthermore, the space $X^s_0(\mathcal {C}_{\Omega})$ is compactly
embedded into $L^r(\Omega)$, for every $r<2^*_s$. \el \vskip0.1in
\br\label{Remark2.4}\rm When $r=2^*_s,$ the best constant in
\eqref{2.8} is denoted by $\S(s,N)$, that is \be \label{2.9}
\S(s,N):=\inf_{z\in
X^s_0(\mathcal{C}_\Omega)\backslash\{0\}}\frac{\displaystyle\int_{\mathcal
{C}_{\Omega}}y^{1-2s}|\nabla z(x,y)|^2dxdy}
{\Big(\displaystyle\int_{\Omega}|z(x,0)|^{2^*_s}dx\Big)^{\frac{2}{2^*_s}}}.\ee
 It is not achieved in any bounded domain and, for all $ z\in X^s(\R^{N+1}_+)$,
\be
\label{2.10} \int_{\R^{N+1}_+}y^{1-2s}|\nabla z(x,y)|^2dxdy\geq \S(s,N)
\left(\int_{\R^{N}}|z(x,0)|^{\frac{2N}{N-2s}}dx\right)^{\frac{N-2s}{N}}.
\ee
$\S(s,N)$ is achieved for $\Omega=\R^N$ by functions  $w_{\varepsilon}$ which are the $s$-harmonic extensions of
\be
\label{2.11} u_{\varepsilon}(x):=\frac{\varepsilon^{(N-2s)/2}}{(\varepsilon^2+|x|^2)^{(N-2s)/2}},
\quad\,\, \varepsilon>0,\,\,\, x\in\R^N.
\ee Let $U(x)=(1+|x|^2)^{\frac{2s-N}{2}}$ and let $\mathcal{W}$ be the extension of $U$ (cf.\ \cite{BCP,B1}).\
Then
$$
\mathcal{W}(x,y)=E_s(U)=c_{N,s}y^{2s}\int_{\R^N}\frac{U(z)dz}{(|x-z|^2+y^2)^{\frac{N+2s}{2}}},
$$
is the extreme function for the fractional Sobolev inequality \eqref{2.10}.\
The constant $\S(s,N)$ given in \eqref{2.9} takes the exact value
$$
\S(s,N)=\frac{2\pi^s\Gamma(\frac{N+2s}{2})\Gamma(1-s)
    (\Gamma(\frac{N}{2}))^{\frac{2s}{N}}}{\Gamma(s)\Gamma(\frac{N-2s}{2})(\Gamma(N))^{s}},
$$
and it is achieved for $\Omega=\R^N$ by the functions
$w_{\varepsilon}$.
\er
\noindent
Now, we consider the following minimization problem
\be\label{2.12}
\S_{s,\alpha,\beta}:=\inf_{(w_1,w_2)\in
E^s_0(\mathcal {C}_\Omega)\backslash\{0\}}\frac{\displaystyle\int_{\mathcal
{C}_{\Omega}}y^{1-2s}(|\nabla w_1|^2+|\nabla w_2|^2)dxdy}
{\Big(\displaystyle\int_{\Omega}|w_1|^{\alpha}|w_2|^{\beta}dx\Big)^{\frac{2}{2^*_s}}}.
\ee
Using ideas from  \cite{Alves}, we establish a relationship between
$\S(s,N)$ and $\S_{s,\alpha,\beta}$ (see also \cite{gruppone}).

\bl\label{lemma2.5}
For the constants $\S(s,N)$ and $\S_{s,\alpha,\beta}$ introduced in \eqref{2.9} and \eqref{2.12}, it holds
\be\label{2.13}
\S_{s,\alpha,\beta}=
\left[\left(\frac{\alpha}{\beta}\right)^{\frac{\beta}{2^*_s}}+\left(\frac{\beta}{\alpha}\right)^{\frac{\alpha}{2^*_s}}
\right]\S(s,N).
\ee
In particular, the constant $\S_{s,\alpha,\beta}$ is achieved for $\Omega=\R^N$.
\el

\begin{proof}
Let $\{z_n\}\subset X^s_0(\mathcal {C}_{\Omega})$  be a
minimization sequence for $\S(s,N).$ Let $\sigma,t>0$ to be chosen later and consider
the sequences $w_{1,n}:=\sigma z_n$ and $w_{2,n}:=tz_n$ in $X^s_0(\mathcal {C}_{\Omega})$. By means of \eqref{2.12}, we have
\begin{equation*}
\frac{\sigma^2+t^2}{(\sigma^{\alpha}t^{\beta})^{\frac{2}{2^*_s}}}\frac{\displaystyle\int_{\mathcal
{C}_{\Omega}}y^{1-2s}|\nabla z_n(x,y)|^2dxdy}
{\left(\displaystyle\int_{\Omega}|z_n|^{2^*_s}dx\right)^{\frac{2}{2^*_s}}}\geq
\S_{s,\alpha,\beta}.
\end{equation*}
Defining $g:\R^+\to\R^+$ by setting $g(x):=x^{\frac{2\beta}{2^*_s}}+x^{\frac{-2\alpha}{2^*_s}}$, we have
$$
\frac{\sigma^2+t^2}{(\sigma^{\alpha}t^{\beta})^{\frac{2}{2^*_s}}}=g\Big(\frac{\sigma}{t}\Big),
\qquad\,\,
\min_{\R^+}g=g(x_0)=g(\sqrt{\alpha/\beta})=
\left(\frac{\alpha}{\beta}\right)^{\frac{\beta}{2^*_s}}+\left(\frac{\beta}{\alpha}\right)^{\frac{\alpha}{2^*_s}}.
$$
Choosing $\sigma,t$ in the previous inequality such that $\sigma/t=\sqrt{\alpha/\beta}$ and letting $n\to\infty$ yields
\begin{equation*}
\Big[\left(\frac{\alpha}{\beta}\right)^{\frac{\beta}{2^*_s}}+
\left(\frac{\beta}{\alpha}\right)^{\frac{\alpha}{2^*_s}}\Big]\S(s,N)\geq
\S_{s,\alpha,\beta}.
\end{equation*}
On the other hand, let $\{(w_{1,n},w_{2,n})\}\subset E^s_0(\mathcal {C}_\Omega)\setminus\{0\}$ be
a minimizing sequence for $\S_{s,\alpha,\beta}.$ Set
$h_n:=\sigma_nw_{2,n}$ for $\sigma_n>0$ with $\int_{\Omega}|w_{1,n}|^{2^*_s}dx=\int_{\Omega}|h_n|^{2^*_s}dx.$
Then Young's inequality yields
\begin{equation*}
\int_{\Omega}|w_{1,n}|^{\alpha}|h_n|^{\beta}dx\leq\displaystyle\frac{\alpha}{2^*_s}
\int_{\Omega}|w_{1,n}|^{2^*_s}dx+\frac{\beta}{2^*_s}
\int_{\Omega}|h_n|^{2^*_s}dx=\displaystyle\int_{\Omega}|h_n|^{2^*_s}dx=\int_{\Omega}|w_{1,n}|^{2^*_s}dx.
\end{equation*}
In turn, we can estimate
$$
\begin{array}{l}
\displaystyle\frac{\displaystyle\int_{\mathcal {C}_{\Omega}}y^{1-2s}(|\nabla
w_{1,n}(x,y)|^2+|\nabla w_{2,n}(x,y)|^2)dxdy}
{\Big(\displaystyle\int_{\Omega}|w_{1,n}|^{\alpha}|w_{2,n}|^{\beta}dx\Big)^{\frac{2}{\alpha+\beta}}}\vspace{0.1cm}\\
=\displaystyle
\frac{\sigma_n^{\frac{2\beta}{2^*_s}}\displaystyle\int_{\mathcal
{C}_{\Omega}}y^{1-2s}(|\nabla w_{1,n}(x,y)|^2+|\nabla
w_{2,n}(x,y)|^2)dxdy}
{\Big(\displaystyle\int_{\Omega}|w_{1,n}|^{\alpha}|h_n|^{\beta}dx\Big)^{\frac{2}{\alpha+\beta}}}\vspace{0.1cm}\\
\geq\displaystyle\frac{\sigma_n^{\frac{2\beta}{2^*_s}}\displaystyle\int_{\mathcal
{C}_{\Omega}}y^{1-2s}|\nabla w_{1,n}(x,y)|^2dxdy}
{\left(\displaystyle\int_{\Omega}|w_{1,n}|^{2^*_s}dx\right)^{2/2^*_s}}\vspace{0.1cm}\\
\displaystyle\hspace{0.3cm}+\frac{\sigma_n^{\frac{2\beta}{2^*_s}}\sigma_n^{-2}
\displaystyle\int_{\mathcal {C}_{\Omega}}y^{1-2s}|\nabla h_n(x,y)|^2dxdy}
{\left(\displaystyle\int_{\Omega}|h_n|^{2^*_s}dx\right)^{2/2^*_s}}\vspace{0.1cm}\\
\geq \S(s,N)g(\sigma_n)\geq\displaystyle \S(s,N)g(\sqrt{\alpha/\beta}).
\end{array}
$$
Passing to the limit as $n\to\infty$ in the last inequality we obtain
\begin{equation*}
\Big[\left(\frac{\alpha}{\beta}\right)^{\frac{\beta}{2^*_s}}+
\left(\frac{\beta}{\alpha}\right)^{\frac{\alpha}{2^*_s}}\Big]\S(s,N)\leq
\S_{s,\alpha,\beta}.
\end{equation*}
Whence, the conclusion follows by combining the previous inequalities.
\end{proof}

\noindent
In the end of this section, we fix some notations that will be used in the sequel.

\vskip0.1in
\par\noindent {\it Notations.} In this paper we use the following notations:
\begin{itemize}
\item $L^p(\Omega)$, $1\leq p\leq\infty$ denote Lebesgue spaces, with
norm $\|\cdot\|_p$. $E=X^{s}_0(\mathcal {C}_{\Omega})\times X^{s}_0(\mathcal {C}_{\Omega})$ is equipped with the norm $\|z\|^2=\|(w_1,w_2)\|^2=\|w_1\|_{X^s_0(\mathcal {C}_{\Omega})}^2+\|w_2\|_{X^s_0(\mathcal {C}_{\Omega})}^2.$

\item  The dual space of a Banach space $E$ will be denoted by $E^{-1}.$
We set $ tz=t(w_1,w_2)=(tw_1,tw_2)$ for all $z\in E$ and $t\in\R.$ $z=(w_1,w_2)$ is said to be non-negative in $\mathcal{C}_\Omega$
if $w_1(x,y)\geq0, w_2(x,y)\geq0$ in $\mathcal{C}_\Omega$ and to be
positive if $w_1(x,y)>0, w_2(x,y)>0$ in $\mathcal{C}_\Omega$.
\item $|\Omega|$ is the Lebesgue measure of $\Omega$. $B(0;r)$ is  the ball at the origin with radius $r$.
\item $\O(\varepsilon^t)$ denotes $|\O(\varepsilon^t)|/\varepsilon^t\leq C$ as $\varepsilon\rightarrow0$ for $t\geq0.$ $o_n(1)$ denotes $o_n(1)\rightarrow0$ as $n\rightarrow\infty.$
\item $C, C_i, c$ will denote various positive constants which may vary from line to line.
\end{itemize}


\s{The Palais-Smale condition}

In this section we shall detect the range of values $c$ for which the
$(PS)_c$-condition holds for the functional  $\mathcal
{I}_{\lambda,\mu}$.\ Let $c\in\R$ and set, for simplicity, $E:=E^s_0(\mathcal {C}_{\Omega})$. We say $\{z_n\}\subset E$ is a
$(PS)_c$-sequence in $E$ for $\mathcal {I}_{\lambda,\mu}$ if
$\mathcal {I}_{\lambda,\mu}(z_n)=c+o_n(1)$ and $\mathcal
{I}'_{\lambda,\mu}(z_n)=o_n(1)$ strongly in $E^{-1}$, as
$n\to\infty.$ If any $(PS)_c$-sequence $\{z_n\}$ in $E$ for
$\mathcal {I}_{\lambda,\mu}$ admits a convergent subsequence, we say
that $\mathcal {I}_{\lambda,\mu}$ satisfies the $(PS)_c$-condition.
We shall need the following preliminary result.

\vskip0.1in
\bl\label{Lemma3.1}
Let $\{z_n\}\subset E$ be a $(PS)_c$-sequence for $\mathcal {I}_{\lambda,\mu}$ for some $c\in\R$ with
$z_n\rightharpoonup z$ in $E$. Then  $\mathcal
{I}'_{\lambda,\mu}(z)=0$ and there exists a positive constant $K_0$,
depending only on $q,N,s$ and $|\Omega|$, such that
$$
\mathcal
{I}_{\lambda,\mu}(z)\geq-K_0\Big(\lambda^{\frac{2}{2-q}}+\mu^{\frac{2}{2-q}}\Big).
$$
\el
\begin{proof}
Consider $z_n=(w_{1,n},w_{2,n})\subset E$ and $z=(w_1,w_2)\in E$. If $\{z_n\}$ is a
$(PS)_c$-sequence for $\mathcal {I}_{\lambda,\mu}$ with
$z_n\rightharpoonup z$ in $E$, then $w_{1,n}\rightharpoonup w_{1}$ and
$w_{2,n}\rightharpoonup w_{2}$ in $X^{s}_0(\mathcal {C}_{\Omega}),$ as
$n\to\infty.$  Then, by virtue of Sobolev embedding theorem (Lemma
\ref{Lemma2.3}),  we also have $w_{1,n}(\cdot,0)\rightarrow w_{1}(\cdot,0)$ and
$w_{2,n}(\cdot,0)\to w_{2}(\cdot,0)$ strongly in
$L^q(\Omega)$, as $n\to\infty$.\  Of course, up to a further subsequence,
$w_{1,n}(\cdot,0)\to w_{1}(\cdot,0)$ and $w_{2,n}(\cdot,0)\rightarrow w_{2}(\cdot,0)$
a.e.\ in $\Omega.$  It is standard to check  that  $\mathcal {I}'_{\lambda,\mu}(z)=0$.
This implies that $\langle\mathcal {I}'_{\lambda,\mu}(z),z\rangle=0$, namely
$$
k_s\int_{\mathcal{C}_\Omega}y^{1-2s}\left(|\nabla w_1|^{2}+|\nabla w_2|^{2}\right)dxdy=\int_{\Omega}(\lambda|w_1|^q+\mu|w_2|^q)dx
+2\int_{\Omega}|w_1|^{\alpha}|w_2|^{\beta}dx.
$$
Consequently, we get
\begin{align}
\label{3.1}
\mathcal{I}_{\lambda,\mu}(z)&=\Big(\frac{1}{2}-\frac{1}{2^*_s}\Big)k_s\int_{\mathcal{C}_\Omega}y^{1-2s}(|\nabla
w_1|^{2}+|\nabla w_2|^{2})dxdy \\
&-\Big(\frac{1}{q}-\frac{1}{2^*_s}\Big)\int_{\Omega}(\lambda|w_1|^q+\mu|w_2|^q)dx.  \notag
\end{align}
By \eqref{3.1}, H\"{o}lder and Young inequalities and the
Sobolev embedding theorem, we obtain
 \begin{align*}
\mathcal {I}_{\lambda,\mu}(z)&=\displaystyle\Big(\frac{1}{2}-\frac{1}{2^*_s}\Big)\|z\|^2
-\Big(\frac{1}{q}-\frac{1}{2^*_s}\Big)\int_{\Omega}(\lambda|w_1|^q+\mu|w_2|^q)dx    \\
&\geq\displaystyle\frac{s}{N}\|z\|^2-\frac{2^*_s-q}{q2^*_s}|\Omega|^{(2^*_s-q)/{2^*_s}}
(\lambda\|w_1\|_{2^*_s}^q+\mu\|w_2\|_{2^*_s}^q)  \\
&\geq\displaystyle\frac{s}{N}\|z\|^2-\frac{2^*_s-q}{q2^*_s}|\Omega|^{(2^*_s-q)/{2^*_s}}(k_s\S(s,N))^{-\frac{q}{2}}
(\lambda\|w_1\|^q+\mu\|w_2\|^q)  \\
&\geq\displaystyle\frac{s}{N}\|z\|^2-C\Big(\frac{2-q}{2}\varepsilon\left[\lambda^{\frac{2}{2-q}}+\mu^{\frac{2}{2-q}}\right]
+\frac{q}{2}\varepsilon^{-\frac{2-q}{q}}\left[\|w_1\|^2+||w_2\|^2\right]\Big)   \\
&=\displaystyle\frac{s}{N}\|z\|^2-\frac{s}{N}\|z\|^2-K_0\left(\lambda^{\frac{2}{2-q}}+\mu^{\frac{2}{2-q}}\right)   \\
&=\displaystyle-K_0\left(\lambda^{\frac{2}{2-q}}+\mu^{\frac{2}{2-q}}\right),
\end{align*}
which yields the assertion, where we have put
$$
C:=\frac{2^*_s-q}{q2^*_s}|\Omega|^{(2^*_s-q)/{2^*_s}}(k_s\S(s,N))^{-\frac{q}{2}},
\quad
\varepsilon:=\left(\frac{NqC}{2s}\right)^{\frac{q}{2-q}},
\quad
K_0:=\frac{\varepsilon(2-q)}{2}C,
$$
the  positive constants involving only $q$, $|\Omega|$, $s$ and $N$.
\end{proof}

\vskip0.1in
\bl\label{Lemma3.2} If $\{z_n\}\subset E$ is a
$(PS)_c$-sequence for $\mathcal {I}_{\lambda,\mu}$, then $\{z_n\}$
is bounded  in $E$.\el

\begin{proof}Let $z_n=(w_{1,n},w_{2,n})\subset E$
be a $(PS)_c$-sequence for $\mathcal {I}_{\lambda,\mu}$ and suppose, by contradiction, that
$\|z_n\|\rightarrow\infty$, as $n\to\infty$. Put
$$
\widetilde{z}_n=(\widetilde{w}_{1,n},\widetilde{w}_{2,n}):=\frac{z_n}{\|z_n\|}=\Big(\frac{w_{1,n}}{\|z_n\|},
\frac{w_{2,n}}{\|z_n\|}\Big).
$$
We may assume that
$\widetilde{z}_n\rightharpoonup\widetilde{z}=(\widetilde{w}_1,\widetilde{w}_2)$ in $E$.
This implies that $\widetilde{w}_{1,n}(\cdot,0)\to\widetilde{w}_1(\cdot,0)$
and $\widetilde{w}_{2,n}(\cdot,0)\to\widetilde{w}_2(\cdot,0)$
strongly in $L^r(\Omega)$ for all $1\leq r<2^*_s$ and, thus,
$$
\int_{\Omega}(\lambda|\widetilde{w}_{1,n}|^q+\mu|\widetilde{w}_{2,n}|^q)dx=
\int_{\Omega}(\lambda|\widetilde{w}_1|^q+\mu|\widetilde{w}_2|^q)dx+o_n(1).
$$
Since $\{z_n\}$ is a $(PS)_c$ sequence for $\mathcal{I}_{\lambda,\mu}$ and $\|z_n\|\rightarrow\infty,$ we get
\begin{align}
\label{3.3}
& \frac{k_s}{2}\int_{\mathcal{C}_\Omega}y^{1-2s}\left(|\nabla \widetilde{w}_{1,n}|^{2}+|\nabla \widetilde{w}_{2,n}|^{2}\right)dxdy
-\frac{\|z_n\|^{q-2}}{q}\int_{\Omega}(\lambda|\widetilde{w}_{1,n}|^q+ \mu|\widetilde{w}_{2,n}|^q)dx  \\
&-\frac{2\|z_n\|^{2^*_s-2}}{2^*_s}
\int_{\Omega}|\widetilde{w}_{1,n}|^{\alpha}|\widetilde{w}_{2,n}|^{\beta}dx=o_n(1),  \notag
\end{align}
and
\begin{align}\label{3.4}
& k_s\int_{\mathcal{C}_\Omega}y^{1-2s}\left(|\nabla \widetilde{w}_{1,n}|^{2}+|\nabla \widetilde{w}_{2,n}|^{2}\right)dxdy
-\|z_n\|^{q-2}\int_{\Omega}(\lambda|\widetilde{w}_{1,n}|^q+\mu|\widetilde{w}_{2,n}|^q)dx \\
& -2\|z_n\|^{2^*_s-2}\int_{\Omega}|\widetilde{w}_{1,n}|^{\alpha}|\widetilde{w}_{2,n}|^{\beta}dx
=o_n(1).     \notag
\end{align}
 Combining \eqref{3.3} and \eqref{3.4}, as $n\to\infty$, we obtain
\begin{align}
\label{3.5}
& k_s\int_{\mathcal{C}_\Omega}y^{1-2s}\left(|\nabla
\widetilde{w}_{1,n}|^{2}+|\nabla
\widetilde{w}_{2,n}|^{2}\right)dxdy   \\
&   =\frac{2(2^*_s-q)}{q(2^*_s-2)}\|z_n\|^{q-2}\int_{\Omega}(\lambda|\widetilde{w}_{1,n}|^q
+\mu|\widetilde{w}_{2,n}|^q)dx+o_n(1).   \notag
\end{align}
In view of $1<q<2$ and $\|z_n\|\rightarrow\infty,$ \eqref{3.5} implies that
$$
k_s\int_{\mathcal{C}_\Omega}y^{1-2s}\left(|\nabla \widetilde{w}_{1,n}|^{2}+|\nabla \widetilde{w}_{2,n}|^{2}\right)dxdy\to 0,
$$
as $n\to\infty$, which contradicts to the fact that $\|\widetilde{z}_n\|=1$ for any $n\geq 1$.
\end{proof}

\bl\label{Lemma3.3}  $\mathcal {I}_{\lambda,\mu}$
satisfies the $(PS)_c$ condition with $c$ satisfying
$$
-\infty<c<c_{\infty}:=\frac{2s}{N}\left(\frac{k_s\S_{s,\alpha,\beta}}{2}\right)^{\frac{N}{2s}}
-K_0\left(\lambda^{\frac{2}{2-q}}+\mu^{\frac{2}{2-q}}\right),
$$ where $K_0$ is the positive constant
introduced in Lemma \ref{Lemma3.1}
\el

\begin{proof}
    Let $\{z_n\}\subset E$ be a $(PS)_c$-sequence for $\mathcal
{I}_{\lambda,\mu}$ with $c\in(-\infty,c_{\infty})$. Write
$z_n=(w_{1,n},w_{2,n}).$ By Lemma \ref{Lemma3.2},  we see that $\{z_n\}$ is
bounded in $E$ and $z_n\rightharpoonup z=(w_{1},w_2)$ up to a
subsequence and $z$  is a critical point of $\mathcal
{I}_{\lambda,\mu}$. Furthermore,
$w_{1,n}\rightharpoonup w_{1}$ and
$w_{2,n}\rightharpoonup w_2$ weakly in
$X^s_0(\mathcal{C}_\Omega)$, $w_{1,n}(\cdot,0)\rightarrow
w_1(\cdot,0)$ and $w_{2,n}(\cdot,0)\rightarrow w_2(\cdot,0)$ strongly in
$L^r(\Omega)$ for every $1\leq r<2^*_s$ and
$w_{1,n}(\cdot,0)\rightarrow
w_1(\cdot,0),w_{2,n}(\cdot,0)\rightarrow w_2(\cdot,0)$ a.e.\ in
$\Omega$, up to a subsequence. Hence, we have
\be\label{3.6}\int_{\Omega}(\lambda|{w}_{1,n}|^q+\mu|{w}_{2,n}|^q)dx=\int_{\Omega}(\lambda|{w_1}|^q+\mu|{w_2}|^q)dx+o_n(1).\ee
Let $\widehat{w}_{1,n}:=w_{1,n}-w_1$, $\widehat{w}_{2,n}:=w_{2,n}-w_2$ and
$\widehat{z}_n:=(\widehat{w}_{1,n},\widehat{w}_{2,n}).$ Then, we obtain
\begin{equation*}
\|\widehat{z}_n\|^2=\|z_n\|^2-\|z\|^2+o_n(1).
\end{equation*}
In light of \cite[Lemma 2.1]{Han}, we also get
\be\label{3.8}\int_{\Omega}|\widehat{w}_{1,n}|^{\alpha}|\widehat{w}_{2,n}|^{\beta}dx=
\int_{\Omega}|w_{1,n}|^{\alpha}|w_{2,n}|^{\beta}dx
-\int_{\Omega}|w_1|^{\alpha}|w_2|^{\beta}dx+o_n(1).\ee Using $\mathcal
{I}_{\lambda,\mu}(z_n)=c+o_n(1)$ and $\mathcal
{I}'_{\lambda,\mu}(z_n)=o_n(1)$ and  \eqref{3.6}-\eqref{3.8}, we conclude
\begin{align}
\label{3.9}\frac{1}{2}\|\widehat{z}_n\|^2-\frac{2}{2^*_s}
\int_{\Omega}|\widehat{w}_{1,n}|^{\alpha}|\widehat{w}_{2,n}|^{\beta}dx&=c-\mathcal
{I}_{\lambda,\mu}(z)+o_n(1),  \\
\|\widehat{z}_n\|^2-2
\int_{\Omega}|\widehat{w}_{1,n}|^{\alpha}|\widehat{w}_{2,n}|^{\beta}dx&=\langle\mathcal
{I}'_{\lambda,\mu}(z_n),z_n\rangle-\langle\mathcal
{I}'_{\lambda,\mu}(z),z\rangle+o_n(1)=o_n(1).  \notag
\end{align}
Hence, we may assume that
\be\label{3.10}\|\widehat{z}_n\|^2\rightarrow \ell, \qquad
2\int_{\Omega}|\widehat{w}_{1,n}|^{\alpha}|\widehat{w}_{2,n}|^{\beta}dx\rightarrow
\ell.\ee
If $\ell=0,$ the proof is complete. If $\ell>0$ then from
\eqref{3.10} and the definition of $\S_{s,\alpha,\beta}$, we have
$$
k_s\S_{s,\alpha,\beta}\left(\frac{\ell}{2}\right)^{\frac{2}{2^*_s}}=\displaystyle
k_s\S_{s,\alpha,\beta}\lim_{n\rightarrow\infty}
\left(\int_{\Omega}|\widehat{w}_{1,n}|^{\alpha}|\widehat{w}_{2,n}|^{\beta}dx\right)^{\frac{2}{2^*_s}}
\leq\displaystyle\lim_{n\rightarrow\infty}\|\widehat{z}_n\|^2=\ell,
$$
which implies that $\ell\geq 2(k_s\S_{s,\alpha,\beta}/2)^{\frac{N}{2s}}.$
On the other hand, from Lemma \ref{Lemma3.1}, \eqref{3.9} and \eqref{3.10}, 
$$
c=\left(\frac{1}{2}-\frac{1}{2^*_s}\right)\ell+\mathcal
{I}_{\lambda,\mu}(z)\geq\frac{2s}{N}\left(\frac{k_s\S_{s,\alpha,\beta}}{2}\right)^{\frac{N}{2s}}
-K_0\left(\lambda^{\frac{2}{2-q}}+\mu^{\frac{2}{2-q}}\right), $$which
contradicts $c<c_\infty.$
\end{proof}

\s{The Nehari manifold}
Since the energy functional $\mathcal{I}_{\lambda,\mu}$ associated with \eqref{2.6} is not bounded on $E$, it is useful to consider
the functional on the Nehari manifold
$$
\mathcal{N}_{\lambda,\mu}:=\big\{z\in E\backslash\{0\}:\, \langle\mathcal
{I}'_{\lambda,\mu}(z),z\rangle=0\big\}.
$$
Thus, $z=(w_1,w_2)\in \mathcal{N}_{\lambda,\mu}$ if and only if $z\neq 0$ and
\be\label{4.1}\langle\mathcal
{I}'_{\lambda,\mu}(z),z\rangle=\|z\|^2-Q_{\lambda,\mu}(z)
-2\int_{\Omega}|w_1|^{\alpha}|w_2|^{\beta}dx=0,
\ee
 where
 $$
 Q_{\lambda,\mu}(z):=\int_{\Omega}(\lambda|w_1|^q+\mu|w_2|^q)dx.
 $$
 It is clear that all critical points of $\mathcal{I}_{\lambda,\mu}$ must lie on $\mathcal{N}_{\lambda,\mu}$ and, as we will see below, local minimizers on $\mathcal{N}_{\lambda,\mu}$ are actually critical points of $\mathcal
{I}_{\lambda,\mu}.$ We have the following results.

\bl\label{Lemma4.1} The energy functional $\mathcal
{I}_{\lambda,\mu}$ is bounded below and coercive on
$\mathcal{N}_{\lambda,\mu}$.\el

\begin{proof}Let $z=(w_1,w_2)\in \mathcal{N}_{\lambda,\mu}.$ Then by
\eqref{4.1} and the H\"{o}lder and Sobolev inequalities
\begin{align}
\label{4.2}
\mathcal{I}_{\lambda,\mu}(z)&=\displaystyle\frac{2^*_s-2}{22^*_s}\|z\|^2-\frac{2^*_s-q}{q2^*_s}Q_{\lambda,\mu}(z)  \\
&\geq\displaystyle\frac{2^*_s-2}{22^*_s}\|z\|^2-\frac{2^*_s-q}{q2^*_s}
(k_s\S(s,N))^{-\frac{q}{2}}|\Omega|^{\frac{2^*_s-q}{2^*_s}}\left(\lambda^{\frac{2}{2-q}}
+\mu^{\frac{2}{2-q}}\right)^{\frac{2-q}{2}}\|z\|^q. \notag
\end{align}
Since $1<q<2$, the functional $\mathcal {I}_{\lambda,\mu}$ is coercive and bounded
below on $\mathcal{N}_{\lambda,\mu}.$
\end{proof}
\noindent
 The Nehari manifold $\mathcal{N}_{\lambda,\mu}$ is closely linked to
 the fibering map $\Phi_z: t\rightarrow\mathcal{I}_{\lambda,\mu}(tz)$ given by
$$\Phi_z(t):=\mathcal {I}_{\lambda,\mu}(tz)=\frac{t^2}{2}\|z\|^2-\frac{t^q}{q}
\int_{\Omega}(\lambda|w_1|^{q}+\mu
|w_2|^{q})dx-\frac{2t^{2^*_s}}{\alpha+\beta}\int_{\Omega}|w_1|^{\alpha}|w_2|^{\beta}dx.$$
Such maps were introduced by Drabek and Pohozaev in  \cite{DP} and later
on used by Brown and Zhang  \cite{BZ}. Notice that we have
\begin{align*}
\Phi'_z(t)&=t\|z\|^2-t^{q-1}
\int_{\Omega}(\lambda|w_1|^{q}+\mu
|w_2|^{q})dx-2t^{2^*_s-1}\int_{\Omega}|w_1|^{\alpha}|w_2|^{\beta}dx, \\
\Phi''_z(t)&=\|z\|^2-(q-1)t^{q-2} \int_{\Omega}(\lambda|w_1|^{q}+\mu
|w_2|^{q})dx-2(2^*_s-1)t^{2^*_s-2}\int_{\Omega}|w_1|^{\alpha}|w_2|^{\beta}dx. \notag
\end{align*}
It is clear that $\Phi'_z(t)=0$ if and only if
$tz\in\mathcal{N}_{\lambda,\mu}.$ Hence,
$z\in\mathcal{N}_{\lambda,\mu}$ if and only if $\Phi'_z(1)=0$.
Introduce now the functional
$$
\mathcal
{R}_{\lambda,\mu}(z):=\langle\mathcal
{I}'_{\lambda,\mu}(z),z\rangle.
$$ Then, for every $z\in\mathcal{N}_{\lambda,\mu}$, we have
\begin{align}
\label{4.5}
\langle\mathcal
{R}'_{\lambda,\mu}(z),z\rangle&=\displaystyle 2\|z\|^2
-qQ_{\lambda,\mu}(z)-22^*_s\int_{\Omega}|w_1|^{\alpha}|w_2|^{\beta}dx  \notag \\
&=\displaystyle (2-q)\|z\|^2-2(2^*_s-q)\int_{\Omega}|w_1|^{\alpha}|w_2|^{\beta}dx  \\
&=\displaystyle(2^*_s-q)Q_{\lambda,\mu}(z)-(2^*_s-2)\|z\|^2.  \notag
\end{align}
Following the method used in  \cite{Wu1}, we split
$\mathcal{N}_{\lambda,\mu}$ into three parts
\begin{align*}
\mathcal{N}^{+}_{\lambda,\mu}&:=\{z\in \mathcal{N}_{\lambda,\mu}: \langle\mathcal
{R}'_{\lambda,\mu}(z),z\rangle>0\},  \\
\mathcal{N}^{0}_{\lambda,\mu}&:=\{z\in \mathcal{N}_{\lambda,\mu}:\langle\mathcal
{R}'_{\lambda,\mu}(z),z\rangle=0\},  \\
\mathcal{N}^{-}_{\lambda,\mu}&:=\{z\in \mathcal{N}_{\lambda,\mu}:\langle\mathcal
{R}'_{\lambda,\mu}(z),z\rangle<0\}.
\end{align*}
Then, we have the following lemmas.

\bl\label{Lemma4.2}  If $z_0$ is a local
minimizer for $\mathcal {I}_{\lambda,\mu}$ on
$\mathcal{N}_{\lambda,\mu}$ and
$z_0\not\in\mathcal{N}^{0}_{\lambda,\mu}$, then $\mathcal
{I}'_{\lambda,\mu}(z_0)=0.$
\el
\begin{proof}
    Let $z_0=(w_{0,1},w_{0,2})\in \mathcal{N}_{\lambda,\mu}$ be a local minimizer for
    the functional $\mathcal{I}_{\lambda,\mu}$ on $\mathcal{N}_{\lambda,\mu}$.
Hence, there exists a Lagrange multiplier $\gamma\in\R$ such that
$\mathcal {I}'_{\lambda,\mu}(z_0)=\gamma\mathcal
{R}'_{\lambda,\mu}(z_0)$. Thus,
\begin{equation*}
\langle\mathcal {I}'_{\lambda,\mu}(z_0),z_0\rangle=\gamma\langle\mathcal
{R}'_{\lambda,\mu}(z_0),z_0\rangle=0.
\end{equation*}
Since $z_0\not\in\mathcal{N}^{0}_{\lambda,\mu},$ then
$\langle\mathcal {R}'_{\lambda,\mu}(z_0),z_0\rangle\neq0,$ yielding
$\gamma=0.$ This completes the proof.
\end{proof}

\noindent
Let $\Lambda_1$ be the positive number defined in \eqref{zwm=6}. Then we have the following result.

\bl\label{Lemma4.3}
Assume that $(\lambda,\mu)\in {\mathscr C}_{\Lambda_1}$. Then
$\mathcal{N}^{0}_{\lambda,\mu}=\emptyset$.
\el
\begin{proof}
Assume by contradiction that there exist $\lambda>0$ and $\mu>0$
with $0<\lambda^{\frac{2}{2-q}}+\mu^{\frac{2}{2-q}}<\Lambda_1$ and such
that $\mathcal{N}^{0}_{\lambda,\mu}\neq\emptyset$. Let $z\in \mathcal{N}^{0}_{\lambda,\mu}$.
Then, by virtue of  \eqref{4.5}, we get
$$
\|z\|^2=\frac{2(2^*_s-q)}{2-q}\int_{\Omega}|w_1|^{\alpha}|w_2|^{\beta}dx,\qquad
\|z\|^2=\frac{2^*_s-q}{2^*_s-2}Q_{\lambda,\mu}(z).
$$
By H\"{o}lder inequality and the Sobolev embedding theorem, we have
\begin{align*}
\|z\| &\geq\Big[\frac{2-q}{2(2^*_s-q)}(k_s\S_{s,\alpha,\beta})^{\frac{2^*_s}{2}}\Big]^{\frac{1}{2^*_s-2}},  \\
\|z\| &\leq\Big(\frac{2^*_s-q}{2^*_s-2}(k_s\S(s,N))^{-\frac{q}{2}}|\Omega|^{\frac{2^*_s-q}{2^*_s}}\Big)^{\frac{1}{2-q}}
\big(\lambda^{\frac{2}{2-q}}+\mu^{\frac{2}{2-q}}\big)^{\frac{1}{2}},
\end{align*}
which leads to the inequality
$$
\lambda^{\frac{2}{2-q}}+\mu^{\frac{2}{2-q}}\geq
\Big(\frac{2^*_s-q}{2^*_s-2}(k_s\S(s,N))^{-\frac{q}{2}}
|\Omega|^{\frac{2^*_s-q}{2^*_s}}\Big)^{-\frac{2}{2-q}}
\Big[\frac{2-q}{2(2^*_s-q)}(k_s\S_{s,\alpha,\beta})^{\frac{2^*_s}{2}}\Big]^{\frac{2}{2^*_s-2}}=\Lambda_1,
$$
contradicting the assumption.
\end{proof}

\noindent
From Lemma~\ref{Lemma4.3}, if $0<\lambda^{\frac{2}{2-q}}+\mu^{\frac{2}{2-q}}<\Lambda_1$, we can write $\mathcal{N}_{\lambda,\mu}=
\mathcal{N}^{+}_{\lambda,\mu}\cup\mathcal{N}^{-}_{\lambda,\mu}$ and
define
$$
\alpha_{\lambda,\mu}:=\inf_{z\in\mathcal{N}_{\lambda,\mu}}\mathcal
{I}_{\lambda,\mu}(z),
\qquad
\alpha^{+}_{\lambda,\mu}:=\inf_{z\in\mathcal{N}^{+}_{\lambda,\mu}}\mathcal
{I}_{\lambda,\mu}(z),
\qquad
\alpha^{-}_{\lambda,\mu}:=\inf_{z\in\mathcal{N}^{-}_{\lambda,\mu}}\mathcal
{I}_{\lambda,\mu}(z).
$$
Moreover, we have the following properties about the Nehari manifold $\mathcal{N}_{\lambda,\mu}.$

\bt\label{Theorem4.4}
The following facts holds
\begin{itemize}
\item[$(i)$] If $(\lambda,\mu)\in {\mathscr C}_{\Lambda_1}$,
then we have $\alpha_{\lambda,\mu}\leq\alpha^{+}_{\lambda,\mu}<0;$
\item[$(ii)$] If $(\lambda,\mu)\in {\mathscr C}_{(q/2)^{2/(2-q)}\Lambda_1}$,
then we have $\alpha^{-}_{\lambda,\mu}>c_0$ for some positive
constant $c_0$ depending on $\lambda,\mu,N,s$ and $|\Omega|.$
\end{itemize}
\et

\begin{proof}
(i) Let $z=(w_1,w_2)\in\mathcal{N}^{+}_{\lambda,\mu}.$ By formula \eqref{4.5}, we have
$$\frac{2-q}{2(2^*_s-q)}\|z\|^2>\int_{\Omega}|w_1|^{\alpha}|w_2|^{\beta}$$
and so,
\begin{align*}
\mathcal{I}_{\lambda,\mu}(z)&=\displaystyle\left(\frac{1}{2}-\frac{1}{q}\right)\|z\|^2
+2\left(\frac{1}{q}-\frac{1}{2^*_s}\right)\int_{\Omega}|w_1|^{\alpha}|w_2|^{\beta}dx\vspace{0.2cm}\\
&\leq\displaystyle\left[\left(\frac{1}{2}-\frac{1}{q}\right)+\left(\frac{1}{q}-\frac{1}{2^*_s}\right)
\frac{2-q}{2^*_s-q}\right]\|z\|^2\vspace{0.2cm}\\
&=\displaystyle-\frac{(2-q)s}{Nq}\|z\|^2<0.
\end{align*}
Therefore, by the definition of
$\alpha_{\lambda,\mu},\alpha^+_{\lambda,\mu}$, we can deduce that
$\alpha_{\lambda,\mu}\leq\alpha^{+}_{\lambda,\mu}<0.$

\par (ii) Let $z\in\mathcal{N}^{-}_{\lambda,\mu}$. By equation \eqref{4.5},
$$\frac{2-q}{2(2^*_s-q)}\|z\|^2<\int_{\Omega}|w_1|^{\alpha}|w_2|^{\beta}dx.$$
By the H\"{o}lder inequality and the Sobolev embedding theorem, we
have
$$\int_{\Omega}|w_1|^{\alpha}|w_2|^{\beta}dx\leq
(k_s\S_{s,\alpha,\beta})^{-\frac{2^*_s}{2}}\|z\|^{2^*_s}.$$
Hence, we obtain
\begin{equation*}
\|z\|>\left(\frac{2-q}{2(2^*_s-q)}\right)^{\frac{1}{2^*_s-2}}(k_s\S_{s,\alpha,\beta})^{\frac{N}{4s}},\quad\text{for
all $z\in\mathcal{N}^{-}_{\lambda,\mu}$.}
\end{equation*}
From the last inequality  we infer that
\begin{align*}
\mathcal{I}_{\lambda,\mu}(z)&=\displaystyle\frac{2^*_s-2}{22^*_s}\|z\|^2-\frac{2^*_s-q}{q2^*_s}Q_{\lambda,\mu}(z)  \\
&\geq\|z\|^q\left[\frac{2^*_s-2}{22^*_s}\|z\|^{2-q}-\frac{2^*_s-q}{q2^*_s}(k_s\S(s,N))^{-\frac{q}{2}}
|\Omega|^{\frac{2^*_s-q}{2^*_s}}\left(\lambda^{\frac{2}{2-q}}
+\mu^{\frac{2}{2-q}}\right)^{\frac{2-q}{2}}\right]\vspace{0.1cm}\\
&>\left(\frac{2-q}{2(2^*_s-q)}\right)^{\frac{q}{2^*_s-2}}(k_s\S_{s,\alpha,\beta})^{\frac{qN}{4s}}
\Big[\frac{2^*_s-2}{22^*_s}(k_s\S_{s,\alpha,\beta})^{\frac{(2-q)N}{4s}}
\left(\frac{2-q}{2(2^*_s-q)}\right)^{\frac{2-q}{2^*_s-2}}\vspace{0.1cm}\\
&\displaystyle-\frac{2^*_s-q}{q2^*_s}(k_s\S(s,N))^{-\frac{q}{2}}|\Omega|^{\frac{2^*_s-q}{2^*_s}}\left(\lambda^{\frac{2}{2-q}}
+\mu^{\frac{2}{2-q}}\right)^{\frac{2-q}{2}}\Big].
\end{align*}
Thus, if $\lambda^{\frac{2}{2-q}}+\mu^{\frac{2}{2-q}}<(q/2)^{\frac{2}{2-q}}\Lambda_1$,
then
$$
\mathcal {I}_{\lambda,\mu}(z)>c_0,\quad \mbox{for
all $z\in\mathcal{N}^{-}_{\lambda,\mu}$},
$$
for some  positive constant
$c_0=c_0(\lambda,\mu,s,N,|\Omega|).$
\end{proof}

	\bl
	\label{Lemma4.6} 
	Let $(\lambda,\mu)\in {\mathscr
C}_{\Lambda_1}$.\ Then, for every $z=(w_1,w_2)\in
E\backslash\{(0,0)\}$, there
exist unique numbers $t^{-}=t^{-}(z)>0$ and $t^{+}=t^{+}(z)>0$ such that
$$
t^{+}z\in\mathcal{N}^{+}_{\lambda,\mu},\qquad
t^{-}z\in\mathcal{N}^{-}_{\lambda,\mu}.
$$
In particular, we have
$$
t^+<\overline{t}<t^{-}, \qquad
\overline{t}:=\Bigg(\frac{(2^*_{s}-q)Q_{\lambda,\mu}(z)}{(2^*_s-2)\|z\|^2}\Bigg)^{\frac{1}{2-q}}
$$
as well as $t\mapsto \mathcal {I}_{\lambda,\mu}(tz)$ strictly increasing on $[t^+,t^-]$ and 
$$
\mathcal {I}_{\lambda,\mu}(t^{+}z)=\min_{0\leq t\leq t^-}\mathcal {I}_{\lambda,\mu}(tz),\qquad
\mathcal{I}_{\lambda,\mu}(t^{-}z)=\max_{t\geq 0}\mathcal
{I}_{\lambda,\mu}(tz).
$$
\el

\begin{proof}
Fix $z=(w_1,w_2)\in E$, so that
$Q_{\lambda,\mu}(z)>0,$ and let 
$\mathfrak{m}:(0,\infty)\to\R$ be defined by
\be
\label{4.14}
\mathfrak{m}(t):=t^{2-2^*_s}\|z\|^2-t^{q-2^*_s}Q_{\lambda,\mu}(z),~~\text{
for $t>0$}.
\ee Obviously, $\mathfrak{m}(t)\rightarrow-\infty$ as
$t\rightarrow0^+$ and $\mathfrak{m}(t)\to 0$ and $\mathfrak{m}(t)>0$ as
$t\rightarrow\infty.$ Since
$$
\mathfrak{m}'(t)=(2-2^*_s)t^{1-2^*_s}\|z\|^2-(q-2^*_s)t^{q-2^*_s-1}Q_{\lambda,\mu}(z),
$$
we have $\mathfrak{m}'(t)=0$ at $t=\overline{t}=\overline{t}_{\max}$,
$\mathfrak{m}'(t)>0$ for $t\in(0,\overline{t}_{\max})$ and
$\mathfrak{m}'(t)<0$ for $t\in(\overline{t}_{\max},\infty).$ Hence,
$\mathfrak{m}$ achieves its maximum at $\overline{t}_{\max}$, is
increasing for $t\in(0,\overline{t}_{\max})$ and decreasing for
$t\in(\overline{t}_{\max},\infty).$ By $(\lambda,\mu)\in {\mathscr
C}_{\Lambda_1}$, \eqref{4.2} and \eqref{zwm=6}, we have
\begin{align}
\label{4.15}Q_{\lambda,\mu}(z)&\leq
(k_s\S(s,N))^{-\frac{q}{2}}|\Omega|^{\frac{2^*_s-q}{2^*_s}}\left(\lambda^{\frac{2}{2-q}}
+\mu^{\frac{2}{2-q}}\right)^{\frac{2-q}{2}}\|z\|^q  \notag \\
&< (k_s\S(s,N))^{-\frac{q}{2}}|\Omega|^{\frac{2^*_s-q}{2^*_s}}\Lambda_1^{\frac{2-q}{2}}\|z\|^q \\
&=\frac{2^*_s-2}{2^*_s-q}\|z\|^q\left[\frac{2-q}{2(2^*_s-q)}(k_s\S_{s,\alpha,\beta})^{\frac{2^*_s}{2}}\right]^{\frac{2-q}{2^*_s-2}}.
\notag
\end{align}
By \eqref{4.15} and a simple calculation we have
\begin{align}\label{4.16}
\mathfrak{m}(\overline{t}_{\max})&=(\overline{t}_{\max})^{2-2^*_s}\|z\|^2-(\overline{t}_{\max})^{q-2^*_s}Q_{\lambda,\mu}(z)\notag \\
                          &=\left[\frac{(2^*_s-q)Q_{\lambda,\mu}(z)}{(2^*_s-2)\|z\|^2}\right]^{\frac{2-2^*_s}{2-q}}\|z\|^2
                          -\left[\frac{(2^*_s-q)Q_{\lambda,\mu}(z)}{(2^*_s-2)\|z\|^2}\right]^{\frac{q-2^*_s}{2-q}}Q_{\lambda,\mu}(z)\notag \\
                        &=\left(\frac{2^*_s-q}{2^*_s-2}\right)^{\frac{2-2^*_s}{2-q}}\times\frac{2-q}{2^*_s-q}\|z\|^{\frac{2(2^*_s-q)}{2-q}}Q_{\lambda,\mu}(z)^{\frac{2-2^*_s}{2-q}} \\
                       &> 2\|z\|^{2^*_s}(k_s \S_{s,\alpha,\beta})^{-\frac{2^*_s}{2}}.\notag
\end{align}
Then, taking into account the definition of $\S_{s,\alpha,\beta}$ we have
\begin{equation*}
\mathfrak{m}(0)=-\infty<0\leq 2\int_{\Omega}|w_1|^{\alpha}|w_2|^{\beta}dx
\leq 2(k_s\S_{s,\alpha,\beta})^{-\frac{2^*_s}{2}}\|z\|^{2^*_s} 
<\mathfrak{m}(\overline{t}_{\max}).
\end{equation*} 
In turn, there exist unique $t^+$ and $t^-$ such that $0<t^+<\overline{t}_{\max}<t^-,$
$$
\mathfrak{m}(t^+)=2\int_{\Omega}|w_1|^{\alpha}|w_2|^{\beta}dx=\mathfrak{m}(t^-),
$$
and
$\mathfrak{m}'(t^+)>0>\mathfrak{m}'(t^-).$ By the equation  for
$\Phi'_z(t)$ and  \eqref{4.14} we have
\be\label{4.17}
\Phi'_z(t)=t^{2^*_s-1}\left[\mathfrak{m}(t)-2\int_{\Omega}|w_1|^{\alpha}|w_2|^{\beta}dx\right],
\ee
which yields $\Phi_z'(t^\pm)=0$. By virtue of \eqref{4.17},
 we also have $\pm\Phi_z''(t^\pm)>0$. This shows that $\Phi_z$ has a local minimum at $t^+$ and local maximum
 at $t^-$ with $t^\pm z\in\mathcal{N}^{\pm}_{\lambda,\mu}$. The function 
 $t\mapsto \mathcal {I}_{\lambda,\mu}(tz)$ is increasing on $[t^+,t^-]$ and decreasing over $[0,t^+]\cup [t^-,\infty)$. Hence,
 $\mathcal {I}_{\lambda,\mu}(t^{+}z)=\min_{0\leq t\leq t^-}\mathcal {I}_{\lambda,\mu}(tz)$ and $\mathcal
 {I}_{\lambda,\mu}(t^{-}z)=\max_{t\geq 0}\mathcal
 {I}_{\lambda,\mu}(tz)$, concluding the proof.
\end{proof}


\s{Existence of Palais-Smale sequences}


\bl\label{Lemma5.1}
Let $(\lambda,\mu)\in {\mathscr C}_{\Lambda_1}$. Then,
for any $z\in\mathcal{N}_{\lambda,\mu}$, there exists $r>0$ and a differentiable
map $\xi: B(0;r)\subset E\rightarrow\R^+$  such that
$\xi(0)=1$ and $\xi(h)(z-h)\in\mathcal{N}_{\lambda,\mu}$ for every $h\in B(0;r)$.  Let us set
\begin{align*}
\T_1 &:=2k_{s}\int_{\mathcal{C}_\Omega}y^{1-2s}\left(\nabla w_1\cdot \nabla h_1+\nabla w_2\cdot \nabla h_2\right)dxdy, \\
\T_2 &:= q\int_{\Omega}\left(\lambda|w_1|^{q-2}w_1h_1+\mu |w_2|^{q-2}w_2h_2\right)dx, \\
\T_3 & := 2\displaystyle\int_{\Omega}\left(\alpha|w_1|^{\alpha-2}w_1h_1|w_2|^{\beta}
+\beta|w_1|^{\alpha}|w_2|^{\beta-2}w_2h_2\right)dx,
\end{align*}
 for all $(h_1,h_2)\in E$ and $(w_1,w_2)\in E.$ Then
\begin{align}
\label{5.1}
\langle\xi'(0),h\rangle&=\displaystyle\frac{\T_3+\T_2-\T_1}{(2-q)\|z\|^2-2(2^*_s-q)
    \displaystyle\int_{\Omega}|w_1|^{\alpha}|w_2|^{\beta}dx},
\end{align}
     for all $(h_1,h_2)\in E.$
\el
\begin{proof}
For $z=(w_1,w_2)\in\mathcal{N}_{\lambda,\mu}$, define a function ${\mathscr H}_z: \R\times E\rightarrow\R$ by
\begin{align*}
  {\mathscr H}_z(\xi,p) &:=  \langle\mathcal
{I}'_{\lambda,\mu}(\xi(z-p)),\xi(z-p)\rangle\vspace{0.2cm}\\
&=\xi^2k_{s}\int_{\mathcal {C}_\Omega}y^{1-2s}\left(|\nabla
    (w_1-p_1)|^2+|\nabla
    (w_2-p_2)|^2\right)dxdy\\
 & -\xi^q \displaystyle\int_{\Omega}\left(\lambda|w_1-p_1|^{q}+\mu |w_2-p_2|^{q}\right)dx-
    2\xi^{2^*_s}\int_{\Omega}|w_1-p_1|^{\alpha}|w_2-p_2|^{\beta}dx.
    \end{align*}
    Then ${\mathscr H}_z(1,0)=\langle\mathcal
{I}'_{\lambda,\mu}(z),z\rangle=0$ and, by Lemma~\ref{Lemma4.3}, we have
\begin{align*}
 \frac{d {\mathscr H}_z(1,(0,0))}{d\xi}&=\displaystyle2\|z\|^2-q\int_{\Omega}\left(\lambda|w_1|^{q}+\mu |w_2|^{q}\right)dx-
    22^*_s\int_{\Omega}|w_1|^{\alpha}|w_2|^{\beta}dx \\
    &= \displaystyle(2-q)\|z\|^2-2(2^*_s-q)\int_{\Omega}|w_1|^{\alpha}|w_2|^{\beta}dx\not=0.
 \end{align*}
 In turn, by virtue of the Implicit Function Theorem, there
 exists $r>0$ and a function $\xi: B(0;r)\subset E\to \R$
 of class $C^1$ such that $\xi(0)=1$ and formula \eqref{5.1} holds, via direct computation.
Moreover, ${\mathscr H}_z(\xi(h),h)=0$, for all $h\in B(0;r),$ is equivalent to
    $$\langle\mathcal
{I}'_{\lambda,\mu}(\xi(h)(z-h)),\xi(h)(z-h)\rangle=0,
\quad\text{for all $h\in B(0;r)$},
$$
namely $\xi(h)(z-h)\in\mathcal{N}_{\lambda,\mu}.$
\end{proof}

\bl\label{Lemma5.2}
Let $(\lambda,\mu)\in {\mathscr C}_{\Lambda_1}$.
Then, for each $z\in\mathcal{N}^-_{\lambda,\mu}$, there exists $r>0$ and a differentiable function $\xi^-: B(0;r)\subset E\rightarrow\R^+$  such that
$\xi^-(0)=1,$  $\xi^-(h)(z-h)\in\mathcal{N}^{-}_{\lambda,\mu}$  for every $h\in B(0;r)$
and formula \eqref{5.1} holds.
\el

\begin{proof}
Arguing as for the proof of Lemma \ref{Lemma5.1}, there exists
$r>0$ and a differentiable function $\xi^-: B(0;r)\subset
E\rightarrow\R^+$ such that $\xi^-(0)=1,$
$\xi^-(h)(z-h)\in\mathcal{N}_{\lambda,\mu}$ for all $h\in B(0;r)$
and formula \eqref{5.1} holds. Since
$$
\langle\mathcal{R}'_{\lambda,\mu}(z),z\rangle=(2-q)\|z\|^2-2(2^*_s-q)\int_{\Omega}|w_1|^{\alpha}|w_2|^{\beta}dx<0,
$$
by the continuity of the functions $\mathcal{R}'_{\lambda,\mu}$ and $\xi^-$, up to reducing
the size of $r>0$, we get
\begin{align*}
\langle\mathcal{R}'_{\lambda,\mu}&(\xi^-(h)(z-h)),\xi^-(h)(z-h)\rangle
=\displaystyle(2-q)\|\xi^-(h)(z-h)\|^2  \\
&-2(2^*_s-q)
\displaystyle\int_{\Omega}|(\xi^-(h)(z-h))_1|^{\alpha}|(\xi^-(h)(z-h))_2|^{\beta}dx<0,
\end{align*}
where $(\xi^-(h)(z-h))_i\in X^{s}_0(\mathcal {C}_{\Omega})$ denote
the components of $\xi^-(h)(z-h)$. This implies that the functions $\xi^-(h)(z-h)$
belong to $\mathcal{N}^{-}_{\lambda,\mu}.$
\end{proof}

\begin{proposition}
    \label{Proposition5.3}
The following facts hold.
\begin{itemize}
\item[$(i)$] Let $(\lambda,\mu)\in {\mathscr C}_{\Lambda_1}.$
Then there is a $(PS)_{\alpha_{\lambda,\mu}}$-sequence
$\{z_n\}\subset\mathcal{N}_{\lambda,\mu}$ for $\mathcal{I}_{\lambda,\mu}$.
 \item[$(ii$)] Let $(\lambda,\mu)\in {\mathscr C}_{(q/2)^{2/(2-q)}\Lambda_1}$.\ Then
there is a $(PS)_{\alpha_{\lambda,\mu}^{-}}$-sequence
$\{z_n\}\subset\mathcal{N}^-_{\lambda,\mu}$ for $\mathcal{I}_{\lambda,\mu}$.
\end{itemize}
\end{proposition}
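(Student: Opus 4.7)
The plan is to apply Ekeland's variational principle to the minimization problems defining $\alpha_{\lambda,\mu}$ and $\alpha^-_{\lambda,\mu}$, and then to convert the resulting quasi-minimality on the Nehari manifold into the asymptotic vanishing of $\mathcal{I}'_{\lambda,\mu}$ in $E^{-1}$ using the differentiable retractions $\xi$ and $\xi^-$ supplied by Lemmas \ref{Lemma5.1} and \ref{Lemma5.2}.

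For assertion $(i)$, Lemma \ref{Lemma4.1} shows that $\mathcal{I}_{\lambda,\mu}$ is bounded below and coercive on $\mathcal{N}_{\lambda,\mu}$. Ekeland's variational principle on $\overline{\mathcal{N}_{\lambda,\mu}}$ then delivers a sequence $\{z_n\}\subset\mathcal{N}_{\lambda,\mu}$ with $\mathcal{I}_{\lambda,\mu}(z_n)<\alpha_{\lambda,\mu}+\tfrac{1}{n}$ and
\[
\mathcal{I}_{\lambda,\mu}(w) \geq \mathcal{I}_{\lambda,\mu}(z_n) - \tfrac{1}{n}\|w-z_n\|, \qquad \forall w \in \mathcal{N}_{\lambda,\mu}.
\]
Coercivity bounds $\|z_n\|$ from above, and $\alpha_{\lambda,\mu}\leq \alpha^{+}_{\lambda,\mu}<0$ from Theorem \ref{Theorem4.4}$(i)$ forces $\|z_n\|$ to be bounded from below. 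Fix $u\in E$ with $\|u\|=1$ and, for small $\rho>0$, use Lemma \ref{Lemma5.1} to build $w_\rho:=\xi_n(\rho u)(z_n-\rho u)\in \mathcal{N}_{\lambda,\mu}$. Feeding $w_\rho$ into the quasi-minimality condition, expanding $\mathcal{I}_{\lambda,\mu}(w_\rho)-\mathcal{I}_{\lambda,\mu}(z_n)$ to first order in $\rho$, and exploiting $\langle\mathcal{I}'_{\lambda,\mu}(z_n),z_n\rangle=0$, I would arrive at
\[
\bigl|\langle\mathcal{I}'_{\lambda,\mu}(z_n),u\rangle\bigr| \leq \frac{C}{n}\bigl(1 + |\langle\xi_n'(0),u\rangle|\,\|z_n\|\bigr).
\]
The remaining task is to bound $\|\xi_n'(0)\|_{E^{-1}}$ uniformly in $n$; by formula \eqref{5.1}, this reduces to a uniform lower bound on
\[
\delta_n := \bigl|(2-q)\|z_n\|^2 - 2(2^*_s-q)\textstyle\int_\Omega |w_{1,n}|^\alpha|w_{2,n}|^\beta\,dx\bigr|.
\]
If $\delta_n\to 0$ along a subsequence, the Nehari identity combined with $\delta_n\to 0$ forces $z_n$ to asymptotically satisfy both relations that define $\mathcal{N}^0_{\lambda,\mu}$. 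Plugging these asymptotic identities into the H\"older/Sobolev chain of Lemma \ref{Lemma4.3} would yield $\lambda^{2/(2-q)}+\mu^{2/(2-q)}\geq \Lambda_1$, contradicting $(\lambda,\mu)\in\mathscr{C}_{\Lambda_1}$. Therefore $\delta_n$ is bounded away from $0$, $\|\xi_n'(0)\|_{E^{-1}}$ is uniformly bounded, and replacing $u$ by $-u$ above produces $\mathcal{I}'_{\lambda,\mu}(z_n)\to 0$ in $E^{-1}$.

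For assertion $(ii)$, the scheme is parallel. The set $\mathcal{N}^{-}_{\lambda,\mu}$ is relatively open in $\mathcal{N}_{\lambda,\mu}$ because $\langle\mathcal{R}'_{\lambda,\mu}(\cdot),\cdot\rangle$ is continuous, while Theorem \ref{Theorem4.4}$(ii)$ gives $\alpha^{-}_{\lambda,\mu}>c_0>0$. I would apply Ekeland's principle to $\mathcal{I}_{\lambda,\mu}$ on $\overline{\mathcal{N}^{-}_{\lambda,\mu}}$ to obtain a quasi-minimizing sequence $\{z_n\}\subset \mathcal{N}^{-}_{\lambda,\mu}$, and invoke Lemma \ref{Lemma5.2} (rather than Lemma \ref{Lemma5.1}) so that the perturbations $\xi_n^{-}(\rho u)(z_n-\rho u)$ remain in $\mathcal{N}^{-}_{\lambda,\mu}$. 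Repeating the first order argument above with $\xi_n^{-}$ in place of $\xi_n$, and using the (stronger) hypothesis $(\lambda,\mu)\in\mathscr{C}_{(q/2)^{2/(2-q)}\Lambda_1}$ to secure the same uniform lower bound on $\delta_n$, delivers $\mathcal{I}'_{\lambda,\mu}(z_n)\to 0$ in $E^{-1}$.

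The main obstacle I expect is the uniform lower bound on $\delta_n$: it is a \emph{quantitative} version of the emptiness of $\mathcal{N}^{0}_{\lambda,\mu}$, and it is precisely where the numerical thresholds $\Lambda_1$ and $(q/2)^{2/(2-q)}\Lambda_1$ enter the argument, through the sharp inequalities already exploited in the proofs of Lemmas \ref{Lemma4.3} and \ref{Lemma4.6}.
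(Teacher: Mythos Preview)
Your proposal is correct and follows essentially the same route as the paper: Ekeland's principle on $\mathcal{N}_{\lambda,\mu}$ (resp.\ $\mathcal{N}^-_{\lambda,\mu}$), the differentiable maps $\xi$ (resp.\ $\xi^-$) from Lemmas~\ref{Lemma5.1}--\ref{Lemma5.2} to build admissible perturbations, a first-order expansion using $\langle\mathcal{I}'_{\lambda,\mu}(z_n),z_n\rangle=0$, and the uniform lower bound on $\delta_n$ via an asymptotic version of the Lemma~\ref{Lemma4.3} argument. One small clarification: the stronger threshold $(q/2)^{2/(2-q)}\Lambda_1$ in part $(ii)$ is not what controls $\delta_n$---that bound only needs $(\lambda,\mu)\in\mathscr{C}_{\Lambda_1}$---but rather enters through Theorem~\ref{Theorem4.4}$(ii)$, which you already invoke, to ensure $\alpha^-_{\lambda,\mu}>c_0>0$ and hence that the Ekeland sequence stays genuinely in $\mathcal{N}^-_{\lambda,\mu}$.
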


\begin{proof}
(i) By Lemma  \ref{Lemma4.1} and Ekeland Variational Principle \cite{E}, there exists a minimizing sequence $\{z_n\}\subset\mathcal{N}_{\lambda,\mu}$ such that
\begin{align}
\mathcal{I}_{\lambda,\mu}(z_n) & <\alpha_{\lambda,\mu}+\frac{1}{n},   \notag \\
\label{5.3}
\mathcal{I}_{\lambda,\mu}(z_n)&<\mathcal
{I}_{\lambda,\mu}(w)+\frac{1}{n}\|w-z_n\|,\quad\,\, \mbox{for each $w\in\mathcal{N}_{\lambda,\mu}$}.
\end{align}
Taking $n$ large and using $\alpha_{\lambda,\mu}<0$, we have
\begin{align}
\label{5.4}
\mathcal{I}_{\lambda,\mu}(z_n)&=\displaystyle\Big(\frac{1}{2}-\frac{1}{2^*_s}\Big)\|z_n\|^2
-\Big(\frac{1}{q}-\frac{1}{2^*_s}\Big)\int_{\Omega}(\lambda|w_{1,n}|^q+\mu|w_{2,n}|^q)dx\vspace{0.1cm}\\
&<\displaystyle\alpha_{\lambda,\mu}+\frac{1}{n}<\frac{\alpha_{\lambda,\mu}}{2}.  \notag
\end{align}
This yields that
\begin{align}
\label{5.5}
-\frac{q2^*_s}{2(2^*_s-q)}\alpha_{\lambda,\mu}&<\int_{\Omega}(\lambda|w_{1,n}|^q+\mu|w_{2,n}|^q)dx  \\
&\leq\displaystyle |\Omega|^{\frac{2^*_s-q}{2^*_s}}(k_s\S(s,N))^{-\frac{q}{2}}(\lambda^{\frac{2}{2-q}}+\mu^{\frac{2}{2-q}})^{\frac{2-q}{2}}
\|z_n\|^q.  \notag
\end{align}
Consequently, $z_n\not=0$ and combining with
 \eqref{5.4} and \eqref{5.5} and using H\"{o}lder inequality
\begin{equation*}
\|z_n\|>\left[-\frac{q2^*_s}{2(2^*_s-q)}\alpha_{\lambda,\mu}|\Omega|^{\frac{q-2^*_s}{2^*_s}}
(k_s\S(s,N))^{\frac{q}{2}}(\lambda^{\frac{2}{2-q}}+\mu^{\frac{2}{2-q}})^{\frac{q-2}{2}}\right]^{\frac{1}{q}},
\end{equation*}
and
\begin{equation}
\label{5.7}
\|z_n\|<\left[\frac{2(2^*_s-q)}{q(2^*_s-2)}|\Omega|^{\frac{2^*_s-q}{2^*_s}}
(k_s\S(s,N))^{-\frac{q}{2}}(\lambda^{\frac{2}{2-q}}+\mu^{\frac{2}{2-q}})^{\frac{2-q}{2}}\right]^{\frac{1}{2-q}}.
\end{equation}
Now we prove that
$$\|\mathcal{I}'_{\lambda,\mu}(z_n)\|_{E^{-1}}\rightarrow0,~~\mbox{as}~n\rightarrow\infty.$$
Fix $n\in{\mathbb N}$. By applying Lemma  \ref{Lemma5.1} to $z_n$, we obtain the function $\xi_n:
B(0;r_n)\rightarrow\R^+$ for some $r_n>0,$ such that
$\xi_n(h)(z_n-h)\in\mathcal{N}_{\lambda,\mu}.$ Take $0<\rho<r_n.$
Let $w\in E$ with $w\not\equiv0$ and put $h^*=\frac{\rho w}{\|w\|}.$
We set  $h_{\rho}=\xi_n(h^*)(z_n-h^*)$, then
$h_{\rho}\in\mathcal{N}_{\lambda,\mu},$ and we have from (\ref{5.3}) that
$$\mathcal
{I}_{\lambda,\mu}(h_\rho)-\mathcal
{I}_{\lambda,\mu}(z_n)\geq-\frac{1}{n}\|h_\rho-z_n\|.$$
By the Mean Value Theorem, we get
$$\langle\mathcal
{I}'_{\lambda,\mu}(z_n),h_\rho-z_n\rangle+o(\|h_\rho-z_n\|)\geq-\frac{1}{n}\|h_\rho-z_n\|.$$
Thus, we have
\begin{equation*}
\langle\mathcal
{I}'_{\lambda,\mu}(z_n),-h^*\rangle+(\xi_n(h^*)-1)\langle\mathcal
{I}'_{\lambda,\mu}(z_n),z_n-h^*\rangle\geq-\frac{1}{n}\|h_\rho-z_n\|+o(\|h_\rho-z_n\|).
\end{equation*}
Whence, from $\xi_n(h^*)(z_n-h^*)\in\mathcal{N}_{\lambda,\mu},$ it follows that
\begin{align*}
    -\rho\Big\langle\mathcal
{I}'_{\lambda,\mu}(z_n),\frac{w}{\|w\|}\Big\rangle&+(\xi_n(h^*)-1)\langle\mathcal
{I}'_{\lambda,\mu}(z_n)-\mathcal
{I}'_{\lambda,\mu}(h_\rho),z_n-h^*\rangle  \\
&\geq-\frac{1}{n}\|h_\rho-z_n\|+o(\|h_\rho-z_n\|).
\end{align*}
So, we get
\begin{align}
\label{5.10}
\displaystyle\Big\langle\mathcal
{I}'_{\lambda,\mu}(z_n),\frac{w}{\|w\|}\Big\rangle&\leq\displaystyle\frac{1}{n\rho}\|h_\rho-z_n\|
+\frac{o(\|h_\rho-z_n\|)}{\rho}\vspace{0.2cm}\\
&+\displaystyle\frac{(\xi_n(h^*)-1)}{\rho}\langle\mathcal
{I}'_{\lambda,\mu}(z_n)-\mathcal
{I}'_{\lambda,\mu}(h_\rho),z_n-h^*\rangle.  \notag
\end{align}
Since $\|h_\rho-z_n\|\leq\rho|\xi_n(h^*)|+|\xi_n(h^*)-1|\|z_n\|$ and
$$
\lim_{\rho\rightarrow0}\frac{|\xi_n(h^*)-1|}{\rho}\leq\|\xi_n'(0)\|.
$$
For  fixed $n\in{\mathbb N}$, if we let $\rho\rightarrow0$ in \eqref{5.10}, then by virtue of
\eqref{5.7} we can choose a constant $C>0$ independent of $\rho$ such that
\begin{equation*}
\Big\langle
\mathcal{I}'_{\lambda,\mu}(z_n),\frac{w}{\|w\|}\Big\rangle\leq\frac{C}{n}(1+\|\xi_n'(0)\|).
\end{equation*}
Thus, we are done once we prove that $\|\xi_n'(0)\|$ remains uniformly bounded.
By \eqref{5.1}, \eqref{5.7} and  H\"{o}lder inequality, we have
$$
\big|\langle\xi_n'(0),h\rangle\big|\leq\frac{C_1\|h\|}{\Big|(2-q)\|z_n\|^2-2(2^*_s-q)
    \displaystyle\int_{\Omega}|w_{1,n}|^{\alpha}|w_{2,n}|^{\beta}dx\big|}
$$
    for some $C_1>0.$ We only need to prove that
    $$
    \left|{(2-q)\|z_n\|^2-2(2^*_s-q)
    \int_{\Omega}|w_{1,n}|^{\alpha}|w_{2,n}|^{\beta}dx}\right|\geq C_2,
$$
    for some $C_2>0$ and $n$ large enough. We argue by contradiction. Suppose that there exists a subsequence $\{z_n\}$ such that
\be
\label{5.12}
(2-q)\|z_n\|^2-2(2^*_s-q)
    \int_{\Omega}|w_{1,n}|^{\alpha}|w_{2,n}|^{\beta}dx=o_n(1).
    \ee
By virtue of \eqref{5.12} and the fact that $z_n\in\mathcal{N}_{\lambda,\mu}$, we have
$$
\|z_n\|^2=\frac{2(2^*_s-q)}{2-q}\int_{\Omega}|w_{1,n}|^{\alpha}|w_{2,n}|^{\beta}dx+o_n(1),\quad\,\,
\|z_n\|^2=\frac{2^*_s-q}{2^*_s-2}Q_{\lambda,\mu}(z_n)+o_n(1).
$$
Taking into account that $\mathcal{I}_{\lambda,\mu}(z_n)\to\alpha_{\lambda,\mu}<0$ as $n\to\infty,$ we have $\|z_n\|\not\to 0$ as $n\to\infty$.
Then, arguing as in the proof of Lemma~\ref{Lemma4.3} yields
$(\lambda,\mu)\not\in {\mathscr C}_{\Lambda_1}$, a contradiction. Then,
$$
\Big\langle\mathcal{I}'_{\lambda,\mu}(z_n),\frac{w}{\|w\|}\big\rangle\leq\frac{C}{n}.
$$
This proves (i).\ By Lemma \ref{Lemma5.2}, one can prove (ii),
but we shall omit the details here.
\end{proof}

\section{Local minimization problems}

Now, we establish the existence of a local minimizer for  $\mathcal
{I}_{\lambda,\mu}$ in $\mathcal{N}^{+}_{\lambda,\mu}$.

\begin{proposition}\label{Proposition5.4}
    Let $(\lambda,\mu)\in {\mathscr C}_{\Lambda_1}$. Then
$\mathcal {I}_{\lambda,\mu}$ has  a local minimizer $z^+$    in
$\mathcal{N}^{+}_{\lambda,\mu}$ satisfying the following
conditions:
\begin{itemize}
\item[(i)] $\mathcal
{I}_{\lambda,\mu}(z^+)=\alpha_{\lambda,\mu}=\alpha^+_{\lambda,\mu}<0;$
\item[(ii)] $z^+$ is a positive solution of \eqref{2.6}.
\end{itemize}
\end{proposition}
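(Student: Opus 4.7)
The plan is to realize $\alpha_{\lambda,\mu}$ as the energy of a critical point sitting inside $\mathcal{N}^+_{\lambda,\mu}$, and then to promote this critical point to a strictly positive solution via symmetrization and a maximum principle argument.

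First I would invoke Proposition \ref{Proposition5.3}(i) to obtain a $(PS)_{\alpha_{\lambda,\mu}}$-sequence $\{z_n\}\subset \mathcal{N}_{\lambda,\mu}$ for $\mathcal{I}_{\lambda,\mu}$. To apply the compactness result Lemma \ref{Lemma3.3}, I need to verify $\alpha_{\lambda,\mu}<c_\infty$. The key observation is that the computation performed inside Lemma \ref{Lemma3.1} only uses the identity $\langle \mathcal{I}'_{\lambda,\mu}(z),z\rangle=0$, which is exactly the Nehari constraint; therefore the bound $\mathcal{I}_{\lambda,\mu}(z)\geq -K_0(\lambda^{2/(2-q)}+\mu^{2/(2-q)})$ holds for every $z\in \mathcal{N}_{\lambda,\mu}$, and passing to the infimum yields $\alpha_{\lambda,\mu}\geq -K_0(\lambda^{2/(2-q)}+\mu^{2/(2-q)})$. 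Combined with $\alpha_{\lambda,\mu}<0$ from Theorem \ref{Theorem4.4}(i) this gives $c_\infty-\alpha_{\lambda,\mu}\geq \tfrac{2s}{N}(k_s\S_{s,\alpha,\beta}/2)^{N/(2s)}>0$, so Lemma \ref{Lemma3.3} produces a subsequence with $z_n\to z^+$ strongly in $E$. By continuity $\mathcal{I}_{\lambda,\mu}(z^+)=\alpha_{\lambda,\mu}<0$ (in particular $z^+\neq 0$), $\mathcal{I}'_{\lambda,\mu}(z^+)=0$, and $z^+\in \mathcal{N}_{\lambda,\mu}$.

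Next I would locate $z^+$ within $\mathcal{N}^+_{\lambda,\mu}$. By Lemma \ref{Lemma4.3} we have $\mathcal{N}^0_{\lambda,\mu}=\emptyset$, so $z^+$ lies in $\mathcal{N}^+_{\lambda,\mu}$ or $\mathcal{N}^-_{\lambda,\mu}$. If $z^+\in \mathcal{N}^-_{\lambda,\mu}$, the uniqueness part of Lemma \ref{Lemma4.6} forces the parameter $t^-$ associated to $z^+$ to equal $1$, and the corresponding $t^+\in(0,1)$ would give $t^+z^+\in \mathcal{N}^+_{\lambda,\mu}$ with $\mathcal{I}_{\lambda,\mu}(t^+z^+)<\mathcal{I}_{\lambda,\mu}(z^+)=\alpha_{\lambda,\mu}$ by the strict monotonicity of $t\mapsto \mathcal{I}_{\lambda,\mu}(tz^+)$ on $[t^+,1]$, contradicting the definition of $\alpha_{\lambda,\mu}$. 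Hence $z^+\in \mathcal{N}^+_{\lambda,\mu}$, and together with Theorem \ref{Theorem4.4}(i) this proves part (i).

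For part (ii), writing $z^+=(w_1,w_2)$, I would pass to $(|w_1|,|w_2|)$. Since $\|z^+\|$, $Q_{\lambda,\mu}(z^+)$ and $\int_\Omega|w_1|^\alpha|w_2|^\beta dx$ depend only on the absolute values, the new pair still lies in $\mathcal{N}_{\lambda,\mu}$ and attains $\alpha_{\lambda,\mu}$; by Lemma \ref{Lemma4.2} it is again a critical point, so we may assume $w_1,w_2\geq 0$ in $\mathcal{C}_\Omega$. The main obstacle I anticipate is excluding the degenerate case in which one component is identically zero: if, say, $w_2\equiv 0$, the system decouples and $w_1$ becomes a minimizer of a purely sublinear scalar problem, and one must compare its energy with that of a judicious coupled test pair $(\sigma u,\tau u)\in \mathcal{N}_{\lambda,\mu}$ (in the spirit of Lemma \ref{lemma2.5}) to contradict the global minimality of $z^+$. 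Once both components are known to be nontrivial and nonnegative, the right-hand sides in \eqref{2.6} are nonnegative and not identically zero, and the strong maximum principle for the degenerate elliptic operator $-\div(y^{1-2s}\nabla\cdot)$ applied componentwise yields $w_1,w_2>0$ throughout $\mathcal{C}_\Omega$, giving the desired positive solution.
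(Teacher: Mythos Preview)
Your compactness step has a sign error that breaks the appeal to Lemma~\ref{Lemma3.3}. The bound you correctly extract from the computation in Lemma~\ref{Lemma3.1} is a \emph{lower} bound, $\alpha_{\lambda,\mu}\geq -K_0(\lambda^{2/(2-q)}+\mu^{2/(2-q)})$. Plugging this into $c_\infty-\alpha_{\lambda,\mu}$ gives only the \emph{upper} bound $c_\infty-\alpha_{\lambda,\mu}\leq \tfrac{2s}{N}(k_s\S_{s,\alpha,\beta}/2)^{N/(2s)}$, the opposite of what you claim. To get your inequality you would need $\alpha_{\lambda,\mu}\leq -K_0(\ldots)$, which is not available. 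Nor does $\alpha_{\lambda,\mu}<0$ alone suffice, since nothing guarantees $c_\infty>0$ on all of $\mathscr{C}_{\Lambda_1}$ (that positivity is only arranged later, on the smaller set $\mathscr{C}_{\Lambda^*}$ in Lemma~\ref{Lemma5.5}). The paper sidesteps the threshold entirely: from coercivity (Lemma~\ref{Lemma4.1}) it passes to a weak limit $z^+$, uses strong $L^q$ convergence of traces to get $Q_{\lambda,\mu}(z_n)\to Q_{\lambda,\mu}(z^+)$, and the Nehari identity $Q_{\lambda,\mu}(z_n)=\tfrac{q(2^*_s-2)}{2(2^*_s-q)}\|z_n\|^2-\tfrac{q2^*_s}{2^*_s-q}\mathcal{I}_{\lambda,\mu}(z_n)$ together with $\alpha_{\lambda,\mu}<0$ forces $Q_{\lambda,\mu}(z^+)>0$, hence $z^+\in\mathcal{N}_{\lambda,\mu}$. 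Then $\alpha_{\lambda,\mu}\leq \mathcal{I}_{\lambda,\mu}(z^+)=\tfrac{s}{N}\|z^+\|^2-\tfrac{2^*_s-q}{q2^*_s}Q_{\lambda,\mu}(z^+)\leq \liminf_n \mathcal{I}_{\lambda,\mu}(z_n)=\alpha_{\lambda,\mu}$ by weak lower semicontinuity, which gives $\|z_n\|\to\|z^+\|$ and hence strong convergence without any reference to $c_\infty$.

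Your treatment of the degenerate case is also only a heuristic, and the specific test pair $(\sigma u,\tau u)$ you propose is not what is needed. The paper's construction is different: if $w_2^+\equiv 0$ then $w_1^+$ solves the decoupled sublinear problem, so $\|(w_1^+,0)\|^2=Q_{\lambda,\mu}(w_1^+,0)$; one then chooses any $w_2^*\not\equiv 0$ rescaled so that $\|(0,w_2^*)\|^2=Q_{\lambda,\mu}(0,w_2^*)$, whence $\|(w_1^+,w_2^*)\|^2=Q_{\lambda,\mu}(w_1^+,w_2^*)$ and the fibering parameter of Lemma~\ref{Lemma4.6} satisfies $\overline{t}=\big(\tfrac{2^*_s-q}{2^*_s-2}\big)^{1/(2-q)}>1$. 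Since $1\in[0,t^-]$, one gets $\alpha_{\lambda,\mu}^+\leq \mathcal{I}_{\lambda,\mu}(t^+w_1^+,t^+w_2^*)\leq \mathcal{I}_{\lambda,\mu}(w_1^+,w_2^*)<\mathcal{I}_{\lambda,\mu}(w_1^+,0)=\alpha_{\lambda,\mu}^+$, the strict inequality coming from $(\tfrac12-\tfrac1q)\|w_2^*\|^2<0$. The remaining parts of your outline (localization in $\mathcal{N}^+_{\lambda,\mu}$ via Lemma~\ref{Lemma4.6}, passage to absolute values, strong maximum principle) agree with the paper.
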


\begin{proof}
    By (i) of Proposition \ref{Proposition5.3}, there exists a minimizing
sequence $\{z_n\}=\{(w_{1,n},w_{2,n})\}$ for
$\mathcal {I}_{\lambda,\mu}$ in $\mathcal{N}_{\lambda,\mu}$ such that, as $n\to\infty,$
\be\label{5.13}
\mathcal {I}_{\lambda,\mu}(z_n)=\alpha_{\lambda,\mu}+o_n(1)
\quad\mbox{and}\quad
\mathcal{I}'_{\lambda,\mu}(z_n)=o_n(1)~~\mbox{in}~E^{-1}.
\ee
By Lemma \ref{Lemma4.1},  we see that
$\mathcal {I}_{\lambda,\mu}$ is coercive on $
\mathcal{N}_{\lambda,\mu}$, and $\{z_n\}$ is bounded in $E$.
Then there exists a subsequence, still denoted by $\{z_n\}$ and
$z^+=(w_1^+,w_2^+)\in E$ such that, as $n\to\infty$,
\begin{equation*}\left\{\begin{array}{l}
\displaystyle w_{1,n}\rightharpoonup w^+_1, \,\,\, w_{2,n}\rightharpoonup
w_2^+, \,\,\,\mbox{weakly
in}~X^s_0(\Omega),  \\
\displaystyle w_{1,n}\rightarrow w_1^+, \,\,\, w_{2,n}\rightarrow w_2^+, \,\,\,
\mbox{strongy in $L^r(\Omega)$ for all $1\leq r<2^*_s$},   \\
\displaystyle w_{1,n}\rightarrow w_1^+, \,\,\, w_{2,n}\rightarrow w_2^+, \,\,\,
\mbox{a.e.\ in}~\Omega,
\end{array}
\right.
\end{equation*}
up to subsequences. This implies that, as $n\to\infty,$
\be
\label{5.15}
Q_{\lambda,\mu}(z_n)=Q_{\lambda,\mu}(z^+)+o_n(1).
\ee
We  claim that $z^+$ is a nontrivial solution of $(\ref{2.6})$. It is easy to verify that $z^+$ is a
weak solution of $(\ref{2.6})$. From $z_n\in\mathcal{N}_{\lambda,\mu}$ and
(\ref{4.2}) we deduce that
\be\label{5.16}Q_{\lambda,\mu}(z_n)=\frac{q(2^*_s-2)}{2(2^*_s-q)}\|z_n\|^2-\frac{q2^*_s}{2^*_s-q}\mathcal
{I}_{\lambda,\mu}(z_n).\ee
 Let $n\rightarrow\infty$ in
\eqref{5.16}, by \eqref{5.13}, \eqref{5.15} and $\alpha_{\lambda,\mu}<0,$ we have
$$Q_{\lambda,\mu}(z^+)\geq-\frac{q2^*_s}{2^*_s-q}\alpha_{\lambda,\mu}>0.$$
Therefore, $z^+\in\mathcal{N}_{\lambda,\mu}$ is a nontrivial
solution of $(\ref{2.6})$. Now we show that $z_n\rightarrow z^+$
strongly in $E$ and $\mathcal
{I}_{\lambda,\mu}(z^+)=\alpha_{\lambda,\mu}$. Since
$z^+\in\mathcal{N}_{\lambda,\mu}$, then by \eqref{5.16}, we obtain
\begin{align*}
\displaystyle \alpha_{\lambda,\mu}&\leq\mathcal {I}_{\lambda,\mu}(z^+)\vspace{0.1cm}\\
                      &=\displaystyle \frac{s}{N}\|z^+\|^2-\frac{2^*_s-q}{q2^*_s}Q_{\lambda,\mu}(z^+)\vspace{0.1cm}\\
                      &\leq\displaystyle \lim_{n\rightarrow\infty}\Big(\frac{s}{N}\|z_n\|^2-\frac{2^*_s-q}{q2^*_s}Q_{\lambda,\mu}(z_n)\Big)\vspace{0.1cm}\\
                      &=\displaystyle \lim_{n\rightarrow\infty}\mathcal
{I}_{\lambda,\mu}(z_n)=\alpha_{\lambda,\mu}.
\end{align*}
This implies that $\mathcal
{I}_{\lambda,\mu}(z^+)=\alpha_{\lambda,\mu}$ and
$\lim_{n\rightarrow\infty}\|z_n\|^2=\|z^+\|^2.$ Since
 $$
 \|z_n-z^+\|^2=\|z_n\|^2-\|z^+\|^2+o_n(1),
 $$
we conclude that $z_n\rightarrow z^+$ in $E$. We claim that
$z^+\in\mathcal{N}^+_{\lambda,\mu}$. Assume by contradiction that
$z^+\in\mathcal{N}^{-}_{\lambda,\mu}$. Then, by Lemma
\ref{Lemma4.6}, there exist (unique) $t^+_1$ and $t^{-}_1$ with
$t^+_1z^+\in\mathcal{N}^+_{\lambda,\mu}$ and
$t^{-}_1z^+\in\mathcal{N}^{-}_{\lambda,\mu}$. In particular, we have
$t^+_1<t^{-}_1=1.$ Since
$$
\frac{d}{dt}\mathcal
{I}_{\lambda,\mu}(tz^+)|_{t=t^+_1}=0,
\quad\mbox{and}\quad
\frac{d^2}{dt^2}\mathcal
{I}_{\lambda,\mu}(tz^+)|_{t=t^+_1}>0,
$$ there exists $t^+_1<t^*\leq
t^{-}_1$ such that $\mathcal {I}_{\lambda,\mu}(t^+_1z^+)<\mathcal
{I}_{\lambda,\mu}(t^*z^+)$. By Lemma \ref{Lemma4.6}, we have
$$
\alpha_{\lambda,\mu}\leq \mathcal {I}_{\lambda,\mu}(t^+_1z^+)<\mathcal
{I}_{\lambda,\mu}(t^*z^+)\leq\mathcal
{I}_{\lambda,\mu}(t^{-}_1z^+)=\mathcal {I}_{\lambda,\mu}(z^+)=\alpha_{\lambda,\mu},
$$
a contradiction. Since $\mathcal{I}_{\lambda,\mu}(z^+)=\mathcal{I}_{\lambda,\mu}(|w_1^+|,|w_2^+|)$
 and $(|w_1^+|,|w_2^+|)\in\mathcal{N}_{\lambda,\mu}$, by Lemma \ref{Lemma4.2} we may assume that $z^+$ is a nontrivial nonnegative solution of \eqref{2.6}.
In particular $w^+_1\not\equiv0,w_2^+\not\equiv0.$ In fact, without loss of
generality, we assume by contradiction that $w_2^+\equiv0.$ Then $w_1^+$ is a
nontrivial nonnegative solution of
\begin{equation*}
\begin{cases}
            -\mbox{div}(y^{1-2s}\nabla w)=0 &\mbox{in}~ \mathcal
{C}_{\Omega}\vspace{0.1cm}\\
           w=0 &\mbox{on}~ \partial_L\mathcal
{C}_{\Omega}\vspace{0.1cm}\\

w=u&\mbox{on}~{\Omega}\times\{0\}\vspace{0.1cm}\\
            \frac{\partial w}{\partial\nu^s}=\lambda|w|^{q-2}w&\mbox{on}
            ~\Omega\times\{0\},
 \end{cases}
\end{equation*}By the maximum principle \cite{SV7} we get $w_1^+>0$
in $X^{s}_0(\mathcal {C}_{\Omega})$ and
$$\|(w_1^+,0)\|^2=Q_{\lambda,\mu}(w_1^+,0)>0.$$Moreover, we can
choose $w^*_2\in X^s_0(\mathcal {C}_{\Omega})\backslash\{0\}$ such
that
$$\|(0,w^*_2)\|^2=Q_{\lambda,\mu}(0,w^*_2)>0.$$Note that
$$Q_{\lambda,\mu}(w_1^+,w^*_2)=Q_{\lambda,\mu}(w_1^+,0)+Q_{\lambda,\mu}(0,w^*_2)>0,$$and
so by Lemma \ref{Lemma4.6} there is unique
$0<t^+<\overline{t}<t^-$ such that $(t^+w_1^+,t^+w^*_2)\in
\mathcal {N}_{\lambda,\mu}^+.$ Moreover,
$$
\overline{t}=\left(\frac{(2^*_{s}-q)Q_{\lambda,\mu}(w_1^+,w^*_2)}{(2^*_s-2)
	\|(w_1^+,w^*_2)\|^2}\right)^{\frac{1}{2-q}}=\left(\frac{2^*_{s}-q}{2^*_s-2}\right)^{\frac{1}{2-q}}>1
$$
and
$$\mathcal{I}_{\lambda,\mu}(t^+w_1^+,t^+w^*_2)=\inf_{0\leq
t\leq t^-}\mathcal
{I}_{\lambda,\mu}(tw_1^+,tw^*_2).$$This implies that
$$\alpha^+_{\lambda,\mu}\leq\mathcal
{I}_{\lambda,\mu}(t^+w_1^+,t^+w^*_2)\leq \mathcal
{I}_{\lambda,\mu}(w_1^+,w^*_2)<\mathcal
{I}_{\lambda,\mu}(w_1^+,0)=\alpha^+_{\lambda,\mu}$$which is a
contradiction. Then by the Strong Maximum Principle \cite[Lemma 2.4]{capella},
 we have $w^+_1,w_2^+>0$ in $\mathcal{C}_{\Omega},$ hence, $z^+$ is a positive solution for \eqref{2.6}.
\end{proof}

\noindent
Next we will use $w_{\varepsilon}=E_{s}(u_{\varepsilon})$, the family of minimizers for the trace inequality \eqref{2.10},
where $u_{\varepsilon}$ is given in \eqref{2.11}. Without loss of generality, we may assume that $0\in\Omega.$
We then define  the cut-off function  $\phi\in C^{\infty}_0(\mathcal{C}_{\Omega}), 0\leq\phi\leq1$ and for small fixed $\rho>0$,
 $$\phi(x,y)=\left\{\begin{array}{ll}
1,&(x,y)\in B_{\rho},\\
0,&(x,y)\not\in \overline{B_{2\rho}},
\end{array}\right.$$
where
$B_{\rho}=\{(x,y):|x|^2+y^2<\rho^2, y>0\}.$ We take $\rho$ so small
that $\overline{B_{2\rho}}\subset\overline{\mathcal{C}_{\Omega}}.$ Recall
$\mathcal{W}$ is the extension of $U$ introduced in Section 2, we
have (cf.\ \cite{B1}) $|\nabla\mathcal{W}(x,y)|\leq
Cy^{-1}\mathcal{W}(x,y)$. Let
$$
U_{\varepsilon}(x)=\frac{1}{(\varepsilon^2+|x|^2)^{\frac{N-2s}{2}}},
\quad \varepsilon>0.
$$
 Then the
extension of $U_{\varepsilon}(x)$ has the form
$$\mathcal{W}_{\varepsilon}(x,y)=c_{N,s}y^{2s}\int_{\R^N}\frac{U_{\varepsilon}(z)dz}{(|x-z|^2+y^2)^{\frac{N+2s}{2}}}
=\varepsilon^{2s-N}\mathcal{W}\left(\frac{x}{\varepsilon},\frac{y}{\varepsilon}\right).$$
Notice that $\phi \mathcal{W}_{\varepsilon}\in
X^s_0(\mathcal{C}_{\Omega})$, for $\varepsilon>0$ small enough.

\bl\label{Lemma5.5} There is
$z\in E\backslash\{0\}$ nonnegative
and ${\Lambda^*}>0$ such that for $(\lambda,\mu)\in {\mathscr C}_{\Lambda^*}$
$$\sup_{t\geq 0}\mathcal
{I}_{\lambda,\mu}(tz)<c_{\infty},$$ where $c_{\infty}$ is given in
Lemma~\ref{Lemma3.3}. In particular, $\alpha^{-}_{\lambda,\mu}<c_{\infty}$ for all
$(\lambda,\mu)\in {\mathscr C}_{\Lambda^*}$.
\el
\begin{proof}
By an argument similar to that of
the proof of \cite[formula (3.26)]{B1}, we get
\begin{align}
\label{5.18}
\displaystyle\|\phi\mathcal{W}_{\varepsilon}\|^2_{X^s_0}&=\displaystyle k_s\int_{\R^{N+1}_{+}}y^{1-2s}|\nabla\mathcal{W}_{\varepsilon}|^2dxdy
+\O(1)\vspace{0.2cm}\\
&=\displaystyle\varepsilon^{2s-N}k_s\int_{\R^{N+1}_{+}}y^{1-2s}|\nabla\mathcal{W}(x,y)|^2dxdy
+\O(1).  \notag
\end{align}
We notice that
\begin{align*}
\|\phi U_{\varepsilon}\|^{2^*_s}_{2^*_s}&=\displaystyle\int_{\Omega}|\phi
U_{\varepsilon}|^{2^*_s}d
x=\int_{\Omega}\frac{\phi(x)^{2^*_s}}{(\varepsilon^2+|x|^2)^N}dx, \\
\|U_{\varepsilon}\|^{2^*_s}_{2^*_s}&=\int_{\R^N}\frac{1}{(\varepsilon^2+|x|^2)^N}dx
=\varepsilon^{-N}\|U\|^{2^*_s}_{2^*_s}.
\end{align*}
Then, one has that
$$\|\phi U_{\varepsilon}\|^{2^*_s}_{2^*_s}-\varepsilon^{-N}\|U\|^{2^*_s}_{2^*_s}=
\int_{\Omega}\frac{\phi^{2^*_s}(x)-1}{(\varepsilon^2+|x|^2)^N}dx-\int_{\R^N\backslash\Omega}\frac{dx}
{(\varepsilon^2+|x|^2)^N},
$$
which yields
\begin{align*}
\displaystyle\left|\|\phi U_{\varepsilon}\|^{2^*_s}_{2^*_s}-\varepsilon^{-N}\|U\|^{2^*_s}_{2^*_s}\right|&\leq\displaystyle\int_{\Omega\backslash B(0;\rho)}\frac{1}{(\varepsilon^2+|x|^2)^N}dx+\int_{\R^N\backslash\Omega}\frac{dx}
{(\varepsilon^2+|x|^2)^N}\vspace{0.2cm}\\
&=\displaystyle\int_{\R^N\backslash B(0;\rho)}\frac{dx}{(\varepsilon^2+|x|^2)^N}
\leq\displaystyle\int_{\R^N\backslash B(0;\rho)}\frac{dx}{|x|^{2N}}=C_3.
\end{align*}
This implies that
$$
1-C_3\varepsilon^{N}\|U\|^{-2^*_s}_{2^*_s}\leq\varepsilon^{N}\|\phi U_\varepsilon\|^{2^*_s}_{2^*_s}\|U\|^{-2^*_s}_{2^*_s}\leq1+C_3\varepsilon^{N}\|U\|^{-2^*_s}_{2^*_s}.
$$
Taking $\varepsilon$ so small that
$C_3\varepsilon^{N}\|U\|^{-2^*_s}_{2^*_s}<1$, since $2/2^*_s=(N-2s)/N<1$, we obtain
\begin{align*}
\displaystyle 1-\varepsilon^{N}C_3\|U\|^{-2^*_s}_{2^*_s}&\leq\displaystyle (1-\varepsilon^{N}C_3\|U\|^{-2^*_s}_{2^*_s})^{2/2^*_s}
\leq\displaystyle \varepsilon^{N-2s}\|\phi U_{\varepsilon}\|^{2}_{2^*_s}\|U\|^{-2}_{2^*_s}   \\
&\leq\displaystyle (1+\varepsilon^{N}C_3\|U\|^{-2^*_s}_{2^*_s})^{2/2^*_s}
\leq\displaystyle 1+\varepsilon^{N}C_3\|U\|^{-2^*_s}_{2^*_s}.
\end{align*}
Hence
$\|\phi U_{\varepsilon}\|^{2}_{2^*_s}=\varepsilon^{2s-N}\|U\|^{2}_{2^*_s}+\O(\varepsilon^{2s})$.
Since $\mathcal{W}=E_s(U)$ optimizes \eqref{2.10},  by \eqref{5.18} we have
\begin{align}\label{5.20}
\displaystyle \frac{\|\phi\mathcal{W}_{\varepsilon}\|^2_{X^s_0}}{\|\phi U_{\varepsilon}\|^{2}_{2^*_s}}&=\displaystyle \frac{\varepsilon^{2s-N}k_s \displaystyle\int_{\R^{N+1}_{+}}y^{1-2s}|\nabla\mathcal{W}(x,y)|^2dxdy
+\O(1)}{\varepsilon^{2s-N}\|U\|^{2}_{2^*_s}+\O(\varepsilon^{2s})}\vspace{0.2cm}\\
&=\displaystyle \frac{k_s \displaystyle\int_{\R^{N+1}_{+}}y^{1-2s}|\nabla\mathcal{W}(x,y)|^2dxdy}{\|U\|^{2}_{2^*_s}}
\Big(1+\O(\varepsilon^{N-2s})\Big)  \notag \\
&=\displaystyle k_s\S(s,N)+\O(\varepsilon^{N-2s}).   \notag
\end{align}
Now we consider the function $J:E\to\R$ defined by
$J(z):={1/2}\|z\|^2-{2/2^*_s}\int_{\Omega}|w_{1}|^{\alpha}|w_{2}|^{\beta}dx$. Set
$w_{0,1}:=\sqrt{\alpha}\phi\mathcal{W}_{\varepsilon}$,
$w_{0,2}:=\sqrt{\beta}\phi\mathcal{W}_{\varepsilon}$ and
$z_0:=(w_{0,1},w_{0,2})\in E.$ Notice that $J(0)=0$, $J(tz_0)>0$ for
$t>0$ small and $J(tz_0)<0$ for $t>0$ large.
The map $t\mapsto J(tz_0)$ maximizes at 
\be \label{5.22}
t_0:=\left(\frac{\|z_0\|^2}{2\displaystyle\int_{\Omega}|w_{0,1}|^{\alpha}|w_{0,2}|^{\beta}dx}\right)^{\frac{1}{2^*_s-2}}.
\ee 
Then from \eqref{2.13}, \eqref{5.20} and (\ref{5.22}), we
conclude that
\begin{align}
\label{5.23}
\sup_{t\geq 0}J(tz_0)&=J(t_0z_0)
=\displaystyle\left(\frac{1}{2}-\frac{1}{2^*_s}\right)\frac{\|z_0\|^{\frac{22^*_s}{2^*_s-2}}}
{\Big(2\displaystyle\int_{\Omega}|w_{0,1}|^{\alpha}|w_{0,2}|^{\beta}\Big)^{\frac{2}{2^*_s-2}}}   \notag\\
&=\displaystyle\frac{s}{N}\Bigg[\frac{(\alpha+\beta)k_s\displaystyle\int_{\mathcal{C}_\Omega}y^{1-2s}|\nabla (\phi \mathcal{W}_{\varepsilon})|^2dxdy}
{\Big(\alpha^{\frac{\alpha}{2}}\beta^{\frac{\beta}{2}}\displaystyle\int_{\Omega}
|\phi{U}_{\varepsilon}|^{2^*_s}dx\Big)^{\frac{2}{2^*_s}}}\Bigg]^{\frac{2^*_s}{2^*_s-2}}\cdot\frac{1}{2^{\frac{N-2s}{2s}}}    \notag\\
&=\displaystyle\frac{s}{N2^{\frac{N-2s}{2s}}}\Big[\left(\frac{\alpha}{\beta}\right)^{\frac{\beta}{\alpha+\beta}}+
\left(\frac{\beta}{\alpha}\right)^{\frac{\alpha}{\alpha+\beta}}\Big]^{\frac{N}{2s}}
 \Bigg[\frac{k_s\displaystyle\int_{\mathcal{C}_\Omega}y^{1-2s}|\nabla(\phi \mathcal{W}_{\varepsilon})|^{2}}{\Big(\displaystyle\int_{\Omega}
|\phi{U}_{\varepsilon}|^{2^*_s}dx\Big)^{\frac{2}{2^*_s}}}\Bigg]^{\frac{N}{2s}}   \\
&=\displaystyle\frac{s}{N2^{\frac{N-2s}{2s}}}\Big[\left(\frac{\alpha}{\beta}\right)^{\frac{\beta}{\alpha+\beta}}
+\left(\frac{\beta}{\alpha}\right)^{\frac{\alpha}{\alpha+\beta}}\Big]^{\frac{N}{2s}}
 \Big[k_s\S(s,N)+\O(\varepsilon^{N-2s})\Big]^{\frac{N}{2s}}    \notag \\
&=\displaystyle\frac{s}{N2^{\frac{N-2s}{2s}}}\Big[k_s\S_{s,\alpha,\beta}+\O(\varepsilon^{N-2s})\Big]^{\frac{N}{2s}}
 \notag\\
&=\displaystyle\frac{s}{N}\frac{1}{2^{\frac{N-2s}{2s}}}(k_s\S_{s,\alpha,\beta})^{\frac{N}{2s}}+\O(\varepsilon^{N-2s})
 =\displaystyle\frac{2s}{N}\left(\frac{k_s\S_{s,\alpha,\beta}}{2}\right)^{\frac{N}{2s}}+\O(\varepsilon^{N-2s}).  \notag
\end{align}
 We now choose $\delta_1>0$ so small that, for all
$(\lambda,\mu)\in {\mathscr C}_{\delta_1}$, we get
$$
c_\infty=\frac{2s}{N}\left(\frac{k_s\S_{s,\alpha,\beta}}{2}\right)^{\frac{N}{2s}}
-K_0\big(\lambda^{\frac{2}{2-q}}+\mu^{\frac{2}{2-q}}\big)>0.
$$
By the definition of $\mathcal {I}_{\lambda,\mu}$ and $z_0$, we have
$$
\mathcal{I}_{\lambda,\mu}(tz_0)\leq\frac{t^2}{2}\|z_0\|^2,
\quad
\mbox{for all $t\geq 0$ and $\lambda,\mu>0$},
$$
which implies that there exists $t_0\in(0,1)$ satisfying
$$\sup_{t\in [0,t_0]}\mathcal
{I}_{\lambda,\mu}(tz_0)<c_{\infty},\quad\mbox{for
all $(\lambda,\mu)\in {\mathscr C}_{\delta_1}$}.
$$
Hence, from \eqref{5.23} and $\alpha,\beta>1$ we see that
\begin{align}
\label{5.24}
\sup_{t\geq t_0}\mathcal
{I}_{\lambda,\mu}(tz_0)   
&=\displaystyle\sup_{t\geq t_0}\Big(J(tz_0)-\frac{t^q}{q}Q_{\lambda,\mu}(z_0)\Big)\vspace{0.1cm}\\
&\leq\displaystyle\frac{2s}{N}\Big(\frac{k_s\S_{s,\alpha,\beta}}{2}\Big)^{\frac{N}{2s}}+\O(\varepsilon^{N-2s})
\displaystyle-\frac{t^q_0}{q}\left(\lambda\alpha^{\frac{q}{2}}+
\mu\beta^{\frac{q}{2}}\right)\int_{B(0;\rho)}|{U}_{\varepsilon}|^qdx  \notag \\
&\leq \displaystyle\frac{2s}{N}\Big(\frac{k_s\S_{s,\alpha,\beta}}{2}\Big)^{\frac{N}{2s}}+\O(\varepsilon^{N-2s})
\displaystyle-\frac{t^q_0}{q}\left(\lambda+
\mu\right)\int_{B(0;\rho)}|{U}_{\varepsilon}|^qdx.  \notag
\end{align}
 Letting $0<\varepsilon\leq \rho,$ we have
\begin{equation*}
\int_{B(0;\rho)}|{U}_{\varepsilon}|^qdx
=\int_{B(0;\rho)}\frac{1}{(\varepsilon^2+|x|^2)^{\frac{q(N-2s)}{2}}}dx
\geq\int_{B(0;\rho)}\frac{1}{(2\rho^2)^{\frac{q(N-2s)}{2}}}dx=C_4,
\end{equation*}
 for some $C_4=C_4(N,s,\rho).$ Combining this with  \eqref{5.24}, for
$\varepsilon=\big(\lambda^{\frac{2}{2-q}}+\mu^{\frac{2}{2-q}}\big)^{\frac{1}{N-2s}}<\rho$,
$$
\displaystyle\sup_{t\geq t_0}\mathcal {I}_{\lambda,\mu}(tz_0)
\leq\displaystyle\frac{2s}{N}\Big(\frac{k_s\S_{s,\alpha,\beta}}{2}\Big)^{\frac{N}{2s}}
+\O\left(\lambda^{\frac{2}{2-q}}+\mu^{\frac{2}{2-q}}\right)-\frac{t^q_0}{q}(\lambda+\mu)C_4.$$Choosing
$\delta_2>0$ small enough, for all $(\lambda,\mu)\in {\mathscr C}_{\delta_2}$, we have
 $$
\O\left(\lambda^{\frac{2}{2-q}}+\mu^{\frac{2}{2-q}}\right)-\frac{t^q_0}{q}(\lambda+\mu)C_4<-K_0\left(\lambda^{\frac{2}{2-q}}+
 \mu^{\frac{2}{2-q}}\right).$$
If we set $\Lambda^*=\min\{\delta_1,\rho^{N-2s},\delta_2\}>0$,
then for $(\lambda,\mu)\in {\mathscr C}_{\Lambda^*}$,
\be
\label{5.26}\sup_{t\geq0}\mathcal {I}_{\lambda,\mu}(tz_0)<c_{\infty}.
\ee
\par\noindent Finally, we prove that $\alpha^{-}_{\lambda,\mu}<c_{\infty}$ for
all $(\lambda,\mu)\in {\mathscr C}_{\Lambda^*}.$
Recall that
$$z_0=(w_{0,1},w_{0,2})=(\sqrt{\alpha}\phi\mathcal{W}_{\varepsilon},\sqrt{\beta}\phi\mathcal{W}_{\varepsilon}).$$
By Lemma~\ref{Lemma4.6} there is $t_0>0$ such that
$t_0z_0\in\mathcal{N}^{-}_{\lambda,\mu}.$ By the definition of
$\alpha^{-}_{\lambda,\mu}$ and \eqref{5.26}, we conclude 
$$
\alpha^{-}_{\lambda,\mu}\leq\mathcal {I}_{\lambda,\mu}(t_0z_0)\leq
\sup_{t\geq0}\mathcal {I}_{\lambda,\mu}(tz_0)<c_{\infty},
$$
for all $(\lambda,\mu)\in {\mathscr C}_{\Lambda^*}$.
\end{proof}

\noindent
Let $\Lambda^*$ be as in Lemma~\ref{Lemma5.5}.
We prove the existence a local minimizer for $\mathcal
{I}_{\lambda,\mu}$ on $\mathcal{N}^{-}_{\lambda,\mu}$.

\begin{proposition}\label{Proposition5.6}
Let $\Lambda^*>0$ be as in Lemma~\ref{Lemma5.5} and set
$$
\Lambda_2:=\min\{\Lambda^*,(q/2)^{\frac{2}{2-q}}\Lambda_1\}.
$$
For $(\lambda,\mu)\in {\mathscr C}_{\Lambda_2}$,
$\mathcal {I}_{\lambda,\mu}$ has a minimizer $z^-$ in
$\mathcal{N}^{-}_{\lambda,\mu}$ with $\mathcal
{I}_{\lambda,\mu}(z^-)=\alpha^{-}_{\lambda,\mu}$.
Furthermore,  $z^-$ is a positive solution of \eqref{2.6}.
\end{proposition}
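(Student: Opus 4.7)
The plan is to mimic the scheme of Proposition~\ref{Proposition5.4}, but this time working on the manifold $\mathcal{N}^-_{\lambda,\mu}$ and exploiting the sharp energy estimate of Lemma~\ref{Lemma5.5} in order to recover compactness. First, since $(\lambda,\mu)\in \mathscr{C}_{\Lambda_2}\subset \mathscr{C}_{(q/2)^{2/(2-q)}\Lambda_1}$, Proposition~\ref{Proposition5.3}(ii) yields a Palais-Smale sequence $\{z_n\}\subset \mathcal{N}^-_{\lambda,\mu}$ for $\mathcal{I}_{\lambda,\mu}$ at the level $\alpha^-_{\lambda,\mu}$. By Lemma~\ref{Lemma5.5} and the inclusion $\mathscr{C}_{\Lambda_2}\subset \mathscr{C}_{\Lambda^*}$ we have $\alpha^-_{\lambda,\mu}<c_\infty$, while Theorem~\ref{Theorem4.4}(ii) guarantees $\alpha^-_{\lambda,\mu}>c_0>-\infty$. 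Hence $\alpha^-_{\lambda,\mu}\in(-\infty,c_\infty)$ and Lemma~\ref{Lemma3.3} applies: up to a subsequence, $z_n\rightarrow z^-$ strongly in $E$, so that $\mathcal{I}_{\lambda,\mu}(z^-)=\alpha^-_{\lambda,\mu}$ and $\mathcal{I}'_{\lambda,\mu}(z^-)=0$.

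Next I would verify that $z^-\in \mathcal{N}^-_{\lambda,\mu}$. Strong convergence and $\alpha^-_{\lambda,\mu}>0$ force $z^-\not=0$, so $z^-\in \mathcal{N}_{\lambda,\mu}$. Passing to the limit in the sign condition $\langle\mathcal{R}'_{\lambda,\mu}(z_n),z_n\rangle<0$ and using Lemma~\ref{Lemma4.3} (which gives $\mathcal{N}^0_{\lambda,\mu}=\emptyset$ on $\mathscr{C}_{\Lambda_1}\supset \mathscr{C}_{\Lambda_2}$) I would conclude that $\langle\mathcal{R}'_{\lambda,\mu}(z^-),z^-\rangle\leq 0$ and is nonzero, hence strictly negative, i.e.\ $z^-\in\mathcal{N}^-_{\lambda,\mu}$.

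To promote $z^-=(w_1^-,w_2^-)$ to a positive solution, I would first replace it with $(|w_1^-|,|w_2^-|)$: the value of $\mathcal{I}_{\lambda,\mu}$ is unchanged and the pair still belongs to $\mathcal{N}_{\lambda,\mu}$, so by Lemma~\ref{Lemma4.2} it is still a critical point; a standard application of Lemma~\ref{Lemma4.6} combined with the strict ordering $t^+<\overline{t}<t^-$ shows the pair stays in $\mathcal{N}^-_{\lambda,\mu}$ (otherwise one would produce a fibering value of $\mathcal{I}_{\lambda,\mu}$ strictly larger than $\alpha^-_{\lambda,\mu}$, contradicting the maximum characterization on the line $t\mapsto \mathcal{I}_{\lambda,\mu}(tz^-)$ for $t\geq t^+$). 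I then rule out the case $w_2^-\equiv 0$ (and symmetrically $w_1^-\equiv 0$) exactly as in the proof of Proposition~\ref{Proposition5.4}: if $w_2^-\equiv 0$, I construct a competitor $(w_1^-,w_2^*)$ with $w_2^*\not\equiv 0$, apply Lemma~\ref{Lemma4.6} to find its fibering extremum on $\mathcal{N}^-_{\lambda,\mu}$, and compare energies, reaching a contradiction with the infimum property of $\alpha^-_{\lambda,\mu}$. Finally, with both components nontrivial and nonnegative, the Strong Maximum Principle for the degenerate operator $-\mathrm{div}(y^{1-2s}\nabla\cdot)$ (cf.\ \cite[Lemma~2.4]{capella}) applied to each equation in \eqref{2.6} yields $w_1^-,w_2^->0$ in $\mathcal{C}_\Omega$.

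The delicate point will be the nonvanishing of the components: while showing $z^-\in\mathcal{N}^-_{\lambda,\mu}$ and obtaining strong convergence are essentially automatic once compactness below $c_\infty$ is secured, the fact that the minimizer on $\mathcal{N}^-_{\lambda,\mu}$ is vectorial (and not of the form $(w,0)$ or $(0,w)$) requires the fibering-map comparison of Lemma~\ref{Lemma4.6} to be executed carefully for the competitor pair, mirroring the scalar-vs-vector dichotomy handled in Proposition~\ref{Proposition5.4}.
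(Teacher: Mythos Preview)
Your overall scheme matches the paper's: obtain a $(PS)_{\alpha^-_{\lambda,\mu}}$ sequence in $\mathcal N^-_{\lambda,\mu}$ via Proposition~\ref{Proposition5.3}(ii), use Lemma~\ref{Lemma5.5} and Theorem~\ref{Theorem4.4}(ii) to place $\alpha^-_{\lambda,\mu}\in(0,c_\infty)$, apply Lemma~\ref{Lemma3.3} to get strong convergence, pass to the limit in $\langle\mathcal R'_{\lambda,\mu}(z_n),z_n\rangle<0$ and invoke Lemma~\ref{Lemma4.3} to land in $\mathcal N^-_{\lambda,\mu}$, then replace by absolute values and apply the strong maximum principle.

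The one place where you diverge is the nonvanishing of the components, which you flag as ``the delicate point'' and propose to handle by transposing the competitor argument from Proposition~\ref{Proposition5.4}. This is both unnecessary and, as written, does not transpose: in Proposition~\ref{Proposition5.4} the comparison worked because $t^+z\in\mathcal N^+_{\lambda,\mu}$ realizes a \emph{minimum} of $t\mapsto\mathcal I_{\lambda,\mu}(tz)$ on $[0,t^-]$, so one could push the energy below $\alpha^+_{\lambda,\mu}$. On $\mathcal N^-_{\lambda,\mu}$ the fibering value is the \emph{maximum} $\mathcal I_{\lambda,\mu}(t^-z)=\max_{t\geq0}\mathcal I_{\lambda,\mu}(tz)$, so introducing a competitor $(w_1^-,w_2^*)$ and taking its $t^-$-projection gives an energy that is a priori \emph{larger}, and no contradiction with the infimum $\alpha^-_{\lambda,\mu}$ follows. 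Moreover, if $w_2^-\equiv0$ then $(w_1^-,0)$ is not even in $\mathcal N^-_{\lambda,\mu}$, so the baseline equality $\mathcal I_{\lambda,\mu}(w_1^-,0)=\alpha^-_{\lambda,\mu}$ you would need is unavailable.

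The paper sidesteps all of this with a one-liner: since $z^-\in\mathcal N^-_{\lambda,\mu}$, the very inequality
\[
(2-q)\|z^-\|^2<2(2^*_s-q)\int_\Omega|w_1^-|^\alpha|w_2^-|^\beta\,dx
\]
forces $\int_\Omega|w_1^-|^\alpha|w_2^-|^\beta\,dx>0$, hence $w_1^-\not\equiv0$ and $w_2^-\not\equiv0$ simultaneously. So the step you single out as delicate is in fact the easiest; simply insert this observation in place of your competitor argument and the proof is complete.
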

\begin{proof}
By (ii) of Proposition \ref{Proposition5.3}, there is a
$(PS)_{\alpha^{-}_{\lambda,\mu}}$ sequence
$\{z_n\}\subset\mathcal{N}^{-}_{\lambda,\mu}$ for $\mathcal
{I}_{\lambda,\mu}$ for all
$$
(\lambda,\mu)\in {\mathscr C}_{(q/2)^{2/(2-q)}\Lambda_1}.
$$
By Lemmas \ref{Lemma3.3} and \ref{Lemma5.5} and (ii) of Theorem
\ref{Theorem4.4}, for $(\lambda,\mu)\in {\mathscr C}_{\Lambda^*}$,
$\mathcal{I}_{\lambda,\mu}$ satisfies the PS condition at the energy level
$\alpha^{-}_{\lambda,\mu}>0.$ Therefore, there exist a
subsequence still denoted by
$\{z_n\}=\{(w_{1,n},w_{2,n})\}\subset\mathcal{N}^{-}_{\lambda,\mu}$
and $z^-:=(w^-_1,w^-_2)\in E$ such that $z_n\rightarrow z^-$
strongly in $E$ and
\be\label{6.10}\mathcal{I}_{\lambda,\mu}(z^-)=\alpha^{-}_{\lambda,\mu}>0~~\mbox{for
all}~ (\lambda,\mu)\in {\mathscr C}_{\Lambda_2}.\ee Now we prove
that $z^-\in\mathcal {N}_{\lambda,\mu}^-.$ By virtue of
$z_n\in\mathcal{N}^{-}_{\lambda,\mu}$  we see that
 $$\langle\mathcal
{R}'_{\lambda,\mu}(z_n),z_n\rangle=(2-q)\|z_n\|^2-2(2^*_s-q)\int_{\Omega}|w_{1,n}|^{\alpha}|w_{2,n}|^{\beta}dx<0,~~\forall
n\in\mathbb{N}.$$Taking the limit in the last inequality and using $z_n\rightarrow z^-$ in $E$, we get 
\be 
\label{6.11}\langle\mathcal
{R}'_{\lambda,\mu}(z^-),z^-\rangle=(2-q)\|z^-\|^2-2(2^*_s-q)\int_{\Omega}|w_{1}^-|^{\alpha}|w_{2}^-|^{\beta}dx\leq0.
\ee
Observe that we must have the strict inequality in \eqref{6.11}.
Otherwise, by \eqref{6.10}, we have $z^-\neq(0,0)$, and so
$z^-\in\mathcal {N}_{\lambda,\mu}^0$, but this contradicts to Lemma
\ref{Lemma4.3}. Thus,  $z^-\in\mathcal {N}_{\lambda,\mu}^-$.  Since
$\mathcal {I}_{\lambda,\mu}(z^-)=\mathcal {I}_{\lambda,\mu}(|z^-|)$
with $|z^-|=(|w^-_1|,|w^-_2|)$, and $|z^-|\in\mathcal
{N}_{\lambda,\mu}^-,$ by Lemma \ref{Lemma4.2} we may assume that
$z^-$ is a nontrivial nonnegative solution of \eqref{2.6}. Moreover,
by $z^-\in\mathcal {N}_{\lambda,\mu}^-$, we get from \eqref{6.11}
that
$$\int_{\Omega}|w^-_1|^{\alpha}|w^-_2|^{\beta}dx>\frac{2-q}{2(2^*_s-q)}\|z^-\|^2>0.$$This
implies that $w^-_1\not\equiv0,w^-_2\not\equiv0.$
 Using the maximum principle as in the end of the proof of
Proposition~\ref{Proposition5.4}, we have $w^-_1,w_2^->0$ in
$\mathcal{C}_{\Omega}.$  Hence, $z^-\in\mathcal {N}_{\lambda,\mu}^-$
is a positive solution for \eqref{2.6}.
\end{proof}

\section{Proof of Theorem~\ref{th11} concluded}
By Proposition~\ref{Proposition5.4},
for $(\lambda,\mu)\in {\mathscr C}_{\Lambda_1}$,
system \eqref{2.6} has a positive solution
$z^+\in\mathcal{N}^{+}_{\lambda,\mu}$.\ By
Proposition \ref{Proposition5.6}, a positive solution
$z^-\in\mathcal{N}^{-}_{\lambda,\mu}$ exists for $(\lambda,\mu)\in {\mathscr C}_{\Lambda_2}.$ Since
$\mathcal{N}^{+}_{\lambda,\mu}\cap\mathcal{N}^{-}_{\lambda,\mu}=\emptyset$,
then  $z^\pm$ are distinct solutions of \eqref{2.6}, so that
$(u^\pm(x),v^\pm(x))=(w^\pm_1(x,0),w^\pm_2(x,0))$ are distinct positive
solutions of \eqref{zwm=1}.\ \qed


\bigskip
\bigskip

\par\noindent {\bf Acknowledgments.}~~The authors would like to
express their sincere gratitude to the anonymous referee for her/his
careful reading the manuscript and valuable comments and
suggestions.

\bigskip
\bigskip

\end{document}